\newtheorem{thm}{Theorem}[section]
\newtheorem{cor}[thm]{Corollary}
\newtheorem{lem}[thm]{Lemma}
\newcommand{\lam}{\lambda}
\newcommand{\R}{{\mathbb{R}}}
\newcommand{\Z}{{\mathbb{Z}}}
\newcommand{\1}{\partial}
\newcommand{\2}{\overline}
\newcommand{\3}{\varepsilon}
\newcommand{\4}{\widetilde}
\begin{document}
\title{Quenching behaviour of a nonlocal parabolic MEMS equation}
\author{Kin Ming Hui\\
Institute of Mathematics, Academia Sinica,\\
Nankang, Taipei, 11529, Taiwan, R. O. C.}
\date{March 16, 2010}
\smallbreak \maketitle
\begin{abstract}
Let $\Omega\subset\mathbb{R}^n$ be a $C^2$ bounded domain and 
$\chi>0$, $\lambda>0$, be constants. We obtain upper bounds 
for the quenching time of the solutions of the nonlocal parabolic 
MEMS equation $u_t=\Delta u
+\lam/(1-u)^2(1+\chi\int_{\Omega}1/(1-u)\,dx)^2$ 
in $\Omega\times (0,\infty)$, $u=0$ on $\1\Omega\times (0,\infty)$, 
$u(x,0)=u_0$ in $\Omega$, when $\lambda$ is large. We will prove 
the compactness of the quenching set under a mild condition on 
$u_0$. When $\Omega=B_R=\{x\in\R^n:|x|<R\}$ and $u_0$ is radially 
symmetric and monotone decreasing in $0\le r\le R$, we prove that 
the point $x=0$ is the only possible quenching set. When 
$\Omega=B_R$ and $u_0$ is radially 
symmetric which also satisfies some strict concavity assumption, 
we prove that for any $\beta\in (2,3)$ the solution satisfies 
$1-u(x,t)\ge C|x|^{\frac{2}{\beta}}$ for some constant $C>0$ and
that the solution $u$ quenches in a finite time for any sufficiently 
large $\lambda>0$. We also obtain the quenching time
estimate in this case.  
\end{abstract}

\vskip 0.2truein

Key words: parabolic nonlocal MEMS equation, quenching time estimates, 
compactness, quenching set

Mathematics Subject Classification: Primary 35B40 Secondary 35B05, 35K50, 
35K20
\vskip 0.2truein
\setcounter{equation}{0}
\setcounter{section}{0}

\setcounter{equation}{0}
\setcounter{thm}{0}

Micro-electromechanical systems (MEMS) devices are key components of many 
electronic devices including accelerometers for airbag deployment in cars, 
inkjet printer heads, and the device for the protection of hard disk, etc. 
It is therefore interesting to understand the mathematical modelling of
the MEMS devices and study the various properties of such models. 
Interested readers can read the book, Modeling MEMS and NEMS \cite{PB}, 
by J.A.~Pelesko and D.H.~Berstein for the mathematical modeling and various 
applications of MEMS devices.

One model of MEMS \cite{GPW}, \cite{LY}, \cite{PB}, consists of a fixed 
membrane and a deformable  membrane of the same shape which 
is coated with a thin dielectric material and held fixed 
at the boundary. When no voltage is applied to the membranes, the 
two membranes are parallel to each other with one at a fixed small distance
on top of the other. When a voltage is applied to the membranes, the 
deformable membrane will deflect towards the fixed membrane. Recently 
there are a lot of study on the equations arising from such model of MEMS by 
N.~Ghoussoub, Y.~Guo, Z.~Pan and M.J.~Ward \cite{GG1}, \cite{GG2},
\cite{GG3}, \cite{GG4}, \cite{G}, \cite{GPW}, J.S.~Guo, B.~Hu and C.J.~Wang \cite{GHW},
K.M.~Hui \cite{H1}, \cite{H2}, 
\cite{H3}, N.I.~Kavallaris, T.~Miyasita and T.~Suzuki \cite{KMS}, 
F.~Lin and Y.~Yang \cite{LY}, L.~Ma, Z.~Guo and J.C.~Wei \cite{GW1},
\cite{GW2}, \cite{MW}, G. Flores, 
G.A.~Mercado, J.A.~Pelesko and A.A.~Triolo  \cite{FMP}, \cite{P}, 
\cite{PT} etc. 

As observed by N.~Ghoussoub, Y.~Guo, J.A.~Pelesko and others \cite{GG1},
\cite{PB}, the deflection of the deformable membrane from its equilibrium 
position is modeled by the following parabolic equation,
\begin{equation*}\left\{\begin{aligned}
\frac{\1 u}{\1 t}=&\Delta u
+\frac{\lambda f(x)}{(1-u)^2} 
\quad\quad\mbox{in }\Omega\times (0,T)\\
u=&0\qquad\qquad\qquad\qquad
\mbox{on }\partial\Omega\times (0,T)\\
u(x,0)=&u_0 \qquad\qquad\qquad\quad\mbox{ in }\Omega
\end{aligned}\right.\tag {$P_{\lambda}'$}
\end{equation*}
where $\lambda\ge 0$ is a constant and $f\not\equiv 0$ is a nonnegative 
function on $\2{\Omega}$ which depends
on the dielectric constant of the coating on the membrane. When the 
voltage  between the membranes are due to circuit series capacitance, 
the deflection of the deformable membrane from its equilibrium position 
is modeled by the following nonlocal parabolic equation \cite{GG5},
\begin{equation*}\left\{\begin{aligned}
\frac{\1 u}{\1 t}=&\Delta u
+\frac{\lambda}{(1-u)^2(1+\chi\int_\Omega\frac{dy}{1-u(y,t)})^2} 
\quad\quad\mbox{in }\Omega\times (0,T)\\
u=&0\qquad\qquad\qquad \qquad\qquad\qquad\quad\quad\quad\quad
\mbox{ on }\partial\Omega\times (0,T)\\
u(x,0)=&u_0 \qquad\quad\qquad\qquad\qquad\qquad\qquad\quad\quad\,\,
\mbox{ in }\Omega
\end{aligned}\right.\tag {$P_{\lambda}$}
\end{equation*}
where $\lambda\ge 0$ and $\chi>0$ are a constant. In \cite{LY} F.H.~Lin 
and Y.~Yang by using variational argument derived the following 
nonlocal MEMS equation
\begin{equation*}\displaystyle
\left\{\begin{aligned}
-\Delta v=&\frac{\lam}{(1-v)^2(1+\chi\int_{\Omega}\frac{dx}{1-v})^2}
\quad\mbox{ in }\Omega\\
v=&0\quad\qquad\qquad\qquad\qquad\quad\,\,\mbox{on }\1\Omega
\end{aligned}\right.
\end{equation*} 
for modeling the stationary deflection between the two parallel plates 
of an electrostatic MEMS device with circuit series capacitance. 

Note that $\lambda$ is proportional to the square of the voltage. When 
the voltage is very large, the two membranes will touch each other or 
quench and we would expect the solutions of ($P_{\lambda}'$) and 
($P_{\lambda}$) cease to exist after a finite time. One would like
to get estimates for the touchdown time of the solutions of ($P_{\lambda}$) 
and ($P_{\lambda}'$). We refer the readers to the papers \cite{GG2},
\cite{GG3}, \cite{GG4}, by N.~Ghoussoub and Y.~Guo for various touchdown 
time estimates for the solutions of ($P_{\lambda}'$).  

In this paper we will obtain touchdown time estimates for the solutions 
of ($P_{\lambda}$). We prove that the quenching set of ($P_{\lambda}$)
is compact under a mild condition on the initial value of the solution. 
When $\Omega=B_R=\{x\in\R^n:|x|<R\}$ and $u_0$
is radially symmetric and monotone decreasing in $0\le r\le R$, 
we prove that the solution $u$ of ($P_{\lambda}$) satisfies 
\begin{equation}
1-u(x,t)\ge C|x|^2\quad\mbox{ in }B_R\times [t_0,T) 
\end{equation}
for some constants $t_0\in (0,T)$, $C>0$, 
and the point $x=0$ is the only possible 
quenching set. When $\Omega=B_R$ and $u_0$ is radially symmetric 
which also satisfies some strict concavity assumption, 
we prove that for any $\beta\in (2,3)$ the solution 
$u$ of ($P_{\lambda}$) satisfies 
\begin{equation}
1-u(x,t)\ge C|x|^{\frac{2}{\beta}}\quad\mbox{in }Q_R^T 
\end{equation}
for some constant $C>0$ and that the solution $u$ quenches in a 
finite time for any sufficiently large $\lambda>0$. We also obtain 
the quenching time estimate in this case.  

We will assume that $\Omega\subset\R^n$ is a bounded $C^2$ domain 
and $\lambda>0$, $\chi\ge 0$, for the rest of the paper. We start 
with some definitions. For any $\delta>0$, 
$R>0$, $T>0$, let $\Omega_{\delta}=\{x\in\Omega:\mbox{dist}(x,\1\Omega)
<\delta\}$ and $Q_R^T=B_R\times (0,T)$. 
Observe that when $\Omega$ is a bounded
convex domain then there exists a constant $\delta_0>0$ such that for 
any $x\in\Omega_{\delta_0}$ there exists a unique point $y\in\1\Omega$ 
such that the line seqment $\2{xy}$ is perpendicular to $\1\Omega$ at 
$y$.   

For any constants $\chi\ge 0$, $\lambda>0$, 
and
\begin{equation}
u_0\in L^1(\Omega)\text{ with }u_0\le a\text{ a.e. in }\Omega
\end{equation}
for some constant $0<a<1$ we say that $u$ is a solution 
(subsolution, supersolution respectively) of ($P_{\lambda})$
in $\Omega\times (0,T)$ if $u\in C^{2,1}(\Omega\times (0,T))
\cap C(\2{\Omega}\times (0,T))$, $u<1$, satisfies 
$$
\frac{\1 u}{\1 t}=\Delta u
+\frac{\lambda}{(1-u)^2(1+\chi\int_\Omega\frac{dy}{1-u(y,t)})^2} 
\quad\quad\mbox{in }\Omega\times (0,T)
$$
($\le$, $\ge$ respectively) in the classical sense with $u(x,t)=0$ 
($\le$, $\ge$ respectively) on $\1\Omega\times (0,T)$, 
$$
\sup_{\2{\Omega}\times [0,T']}u(x,t)<1\quad\forall 0<T'<T,
$$ 
and 
\begin{equation*}
\|u(\cdot,t)-u_0\|_{L^1(\Omega)}\to 0\quad\text{ as }t\to 0.
\end{equation*}

For any solution $u$ of ($P_{\lambda}$) we define the 
quenching time or touchdown time $T_{\lambda}>0$ as the time 
which satisfies 
$$\left\{\aligned
\sup_{\Omega}u(x,t)&<1\quad\forall 0<t<T_{\lambda}\\
\lim_{t\nearrow T_{\lambda}}\sup_{\Omega}u(x,t)&=1.
\endaligned\right.
$$
We say that $u$ has a finite quenching time if $T_{\lambda}<\infty$ and
we say that $u$ quenches at time infinity if $T_{\lambda}=\infty$. 
For any solution $u$ of ($P_{\lambda}$) we let the quenching set of $u$
to be the set of points $x\in\Omega$ such that there exists a sequence 
$(x_k,t_k)\in\Omega\times (0,T_{\lambda})$ such that $x_k\to x$
and $u(x_k,t_k)\to 1$ as $k\to\infty$.

We first recall some results of \cite{H2} and \cite{H3}.

\begin{thm}(Theorem 2.1 and Theorem 2.2 of \cite{H3})
Let 
\begin{equation}
-b\le u_0\le a\quad\mbox{ in }\Omega
\end{equation} 
for some constants $0<a<1$ and $b\ge 0$. Then for any $\lambda>0$ and 
$\chi>0$ there 
exists $T>0$ such that ($P_{\lambda}$) has a unique solution $-b\le u
<1$ in $\Omega\times (0,T)$ which satisfies
\begin{equation}
u(x,t)=\int_{\Omega}G(x,y,t)u_0(y)\,dy+\lambda\int_0^t\int_{\Omega}
\frac{G(x,y,t-s)}{(1-u(y,s))^2(1+\chi\int_{\Omega}\frac{dz}{1-u(z,s)})^2}
\,dy\,ds
\end{equation}
for all $(x,t)\in\Omega\times (0,T)$ where $G(x,y,t)$ is the Dirichlet
Green function for the heat equation in $\Omega\times (0,T)$. 
\end{thm}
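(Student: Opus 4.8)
\ni\emph{Proof sketch.} The natural strategy is a contraction-mapping argument applied to the Duhamel formulation $(6)$. For a measurable $u$ on $\Omega\times(0,T)$ with $u<1$, write
$$
\mathcal{N}[u](y,s)=\frac{1}{(1-u(y,s))^2\bigl(1+\chi\int_{\Omega}\frac{dz}{1-u(z,s)}\bigr)^2},
$$
and define
$$
(\mathcal{T}u)(x,t)=\int_{\Omega}G(x,y,t)u_0(y)\,dy+\lambda\int_0^t\int_{\Omega}G(x,y,t-s)\,\mathcal{N}[u](y,s)\,dy\,ds ,
$$
so that solving $(6)$ amounts to finding a fixed point of $\mathcal{T}$. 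Fix $a'$ with $a<a'<1$ and let $X_T=\{u\in L^{\infty}(\Omega\times(0,T)):-b\le u\le a'\}$ with $\|u\|=\|u\|_{L^{\infty}(\Omega\times(0,T))}$; this is a complete metric space. The first point is that $\mathcal{T}$ maps $X_T$ into itself for $T$ small. Since $0\le\int_{\Omega}G(x,y,t)\,dy\le1$ and $-b\le u_0\le a$, the linear term lies in $[-b,a]$; on $X_T$ we have $1-u\ge1-a'>0$ and $\int_{\Omega}dz/(1-u)\ge0$, hence $0\le\mathcal{N}[u]\le(1-a')^{-2}$ and the nonlinear term lies in $[0,\lambda T(1-a')^{-2}]$. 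Choosing $T$ with $\lambda T(1-a')^{-2}\le a'-a$ gives $-b\le\mathcal{T}u\le a'$.

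The second point is the contraction estimate. With $\phi(w)=(1-w)^{-2}$, both $\phi$ and $\phi'$ are bounded on $[-b,a']$, so $\phi$ is Lipschitz there; consequently the local factor $\phi(u(y,s))$ is Lipschitz in $u$ for $\|\cdot\|$, and so is the nonlocal factor, because $t\mapsto(1+\chi t)^{-2}$ is Lipschitz on $[0,\infty)$ while $t=\int_{\Omega}\phi(u(z,s))\,dz$ changes by at most $|\Omega|\,\mathrm{Lip}(\phi)\,\|u-v\|$. Thus $\|\mathcal{N}[u]-\mathcal{N}[v]\|\le C\|u-v\|$ with $C=C(a',b,\chi,|\Omega|)$, and since $\int_{\Omega}G(x,y,t-s)\,dy\le1$,
$$
\|\mathcal{T}u-\mathcal{T}v\|\le\lambda C T\,\|u-v\| .
$$
Shrinking $T$ so that $\lambda CT<1$, the Banach fixed point theorem yields a unique $u\in X_T$ solving $(6)$. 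Any solution of $(P_{\lambda})$ in the stated class satisfies $(6)$ by Duhamel's principle and has $\sup u<1$ on every $[0,T']$, so the same Lipschitz bound, together with a standard continuation argument (comparing two solutions on their common interval of existence, where the constants stay controlled), upgrades this to uniqueness among all solutions with $-b\le u<1$.

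It remains to verify that the fixed point is a genuine solution with the claimed regularity. Because $\mathcal{N}[u]\in L^{\infty}(\Omega\times(0,T))$, parabolic $L^p$ estimates applied to $(6)$ put $u$ in a parabolic Hölder class; then $s\mapsto\int_{\Omega}dz/(1-u(z,s))$ is continuous by dominated convergence, so $\mathcal{N}[u]$ is Hölder, and Schauder estimates give $u\in C^{2,1}(\Omega\times(0,T))\cap C(\overline{\Omega}\times(0,T))$ with $u=0$ on $\partial\Omega\times(0,T)$ (the Green function vanishes on $\partial\Omega$). Finally $\|u(\cdot,t)-u_0\|_{L^1(\Omega)}\to0$ as $t\to0$ follows from the corresponding property of the Dirichlet heat semigroup on the linear term together with the $O(t)$ bound on the nonlinear term.

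The step I expect to need the most care is the regularity upgrade in the presence of the nonlocal coupling: one must check that the $L^{\infty}$ bound is enough to make $s\mapsto\int_{\Omega}dz/(1-u(z,s))$ regular enough to start the Hölder–Schauder bootstrap, and that the denominator of $\mathcal{N}[u]$ stays bounded away from zero along the iteration — the latter is exactly what forces $T$ to be small and is the reason existence is only local. Since $\int_{\Omega}dz/(1-u)$ enters only additively and is bounded below by $0$, the nonlocal term never worsens any of these estimates, so the argument proceeds much as in the purely local equation $(P_{\lambda}')$. \endproof
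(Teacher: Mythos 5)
The paper does not prove this statement: Theorem~1 is quoted verbatim as Theorem~2.1 and Theorem~2.2 of \cite{H3}, the author's earlier paper, and no proof is given in the present work. Your contraction-mapping argument on the Duhamel formulation is the standard route and is almost certainly the one used in \cite{H3}; the structure (complete metric space $X_T=\{-b\le u\le a'\}$ in $L^\infty$, smallness of $T$ to keep the image in $X_T$ and to make $\mathcal{T}$ a strict contraction, then a H\"older--Schauder bootstrap to upgrade regularity, and a comparison-plus-continuation step for uniqueness in the full class) is the natural decomposition here and there is nothing to object to in it. The key observation you correctly exploit, and the one that makes the nonlocal term harmless, is that $(1+\chi\int_\Omega dz/(1-u))^{-2}$ lies in $(0,1]$ and is Lipschitz in $u$ on $X_T$ because $(1-w)^{-2}$ is Lipschitz on $[-b,a']$ and $t\mapsto(1+\chi t)^{-2}$ is Lipschitz on $[0,\infty)$.

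One step is stated a bit loosely: in the regularity upgrade you remark that $s\mapsto\int_\Omega dz/(1-u(z,s))$ is \emph{continuous} by dominated convergence, and then conclude that $\mathcal{N}[u]$ is H\"older. Continuity alone is not enough to start a Schauder iteration; what you actually need (and already have from the preceding $L^p$-estimates step) is that $u$ is H\"older on $\Omega\times[\tau,T']$ for $0<\tau<T'<T$, whence $\int_\Omega dz/(1-u(z,s))$ is H\"older in $s$ with the same exponent, and therefore $\mathcal{N}[u]$ is parabolic H\"older. With that wording fixed, the bootstrap to $C^{2,1}(\Omega\times(0,T))\cap C(\overline\Omega\times(0,T))$ is correct. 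The remaining points (image of $\mathcal{T}$ stays in $X_T$ because the nonlinear contribution is in $[0,\lambda T(1-a')^{-2}]$; uniqueness in the full class $-b\le u<1$ by applying the same Lipschitz estimate on $[0,T']$ where $\sup u<1$ and continuing; $L^1$ convergence at $t=0$ from the heat semigroup plus the $O(t)$ bound) are all sound.
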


\begin{lem}(cf. Theorem 2.1 of \cite{H2})
Let $u_{0,1}, u_{0,2}\in L^1(\Omega)$ be such that $0\le u_{0,1}
\le u_{0,2}\le a$ in $\Omega$ for some constant $0<a<1$. Let 
$\lambda>0$ and
$0\le f\in C(\2{\Omega}\times (0,T))\cap L^{\infty}(\2{\Omega}\times 
(0,T))$. Suppose $u_1$, $u_2$, are nonnegative subsolution and 
supersolution of ($P_{\lambda}'$) in $\Omega\times (0,T)$ with 
initial value $u_0=u_{0,1}, u_{0,2}$, respectively. Then 
$u_1\le u_2$ in $\2{\Omega}\times (0,T)$.
\end{lem}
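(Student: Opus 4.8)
The plan is to convert the differential sub/supersolution inequalities into a Duhamel (Green-function) integral inequality for the difference $w:=u_1-u_2$, then let the starting time tend to $0$ so as to absorb the merely-$L^1$ initial data, and finally close with Gronwall's lemma. First I would fix $0<T'<T$ and a starting time $\tau\in(0,T')$, and set $h_i(y,s):=\lam f(y,s)/(1-u_i(y,s))^2$ for $i=1,2$. By the definition of sub/supersolution one has $M:=\sup_{\2{\Omega}\times[0,T']}\max(u_1,u_2)<1$, so each $h_i$ is continuous and bounded by $\lam\|f\|_{\infty}/(1-M)^2$ on $\2{\Omega}\times[\tau,T']$. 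Let $V_1$ solve the linear problem $(V_1)_t=\Delta V_1+h_1$ in $\Omega\times(\tau,T')$ with $V_1=0$ on $\1\Omega\times(\tau,T')$ and $V_1(\cdot,\tau)=u_1(\cdot,\tau)$. Then $u_1-V_1$ is a subsolution of the heat equation which is $\le0$ on $\1\Omega\times(\tau,T')$ (since $u_1\le 0$ there) and vanishes at $t=\tau$, so the parabolic maximum principle gives $u_1\le V_1$; writing $V_1$ through the Dirichlet Green function,
$$
u_1(x,t)\le\int_{\Omega}G(x,y,t-\tau)u_1(y,\tau)\,dy
+\int_{\tau}^{t}\!\!\int_{\Omega}G(x,y,t-s)\,h_1(y,s)\,dy\,ds,
$$
and, symmetrically, using $u_2\ge0$ on $\1\Omega$ and that $u_2$ is a supersolution,
$$
u_2(x,t)\ge\int_{\Omega}G(x,y,t-\tau)u_2(y,\tau)\,dy
+\int_{\tau}^{t}\!\!\int_{\Omega}G(x,y,t-s)\,h_2(y,s)\,dy\,ds,
$$
for all $(x,t)\in\Omega\times(\tau,T']$.

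Next I would let $\tau\to0$. Since $u_i(\cdot,\tau)\to u_{0,i}$ in $L^1(\Omega)$ while $G(x,\cdot,t-\tau)\to G(x,\cdot,t)$ in $L^{\infty}(\Omega)$ for fixed $t>0$, the first terms converge to $\int_{\Omega}G(x,y,t)u_{0,i}(y)\,dy$; and since the $h_i$ are bounded on $\2{\Omega}\times[0,T']$ and $G(x,\cdot,\cdot)\in L^1(\Omega\times(0,t))$, the double integrals converge by dominated convergence. Subtracting the two limiting inequalities, discarding the initial terms (which are nonpositive because $u_{0,1}\le u_{0,2}$ and $G\ge0$), and using
$$
\frac{1}{(1-u_1)^2}-\frac{1}{(1-u_2)^2}
=\frac{(2-u_1-u_2)\,w}{(1-u_1)^2(1-u_2)^2}\le K\,w_+,\qquad K:=\frac{2}{(1-M)^4},
$$
on $\2{\Omega}\times[0,T']$ (this is where $0\le u_i<1$ enters), one obtains
$$
w(x,t)\le\lam K\|f\|_{\infty}\int_{0}^{t}\!\!\int_{\Omega}G(x,y,t-s)\,w_+(y,s)\,dy\,ds,\qquad(x,t)\in\Omega\times(0,T'),
$$
where $w_+:=\max(w,0)$.

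Finally I would set $\phi(t):=\sup_{\2{\Omega}}w_+(\cdot,t)$, which is finite (indeed $\phi\le M$) on $(0,T')$. Inserting $w_+(y,s)\le\phi(s)$ and $\int_{\Omega}G(x,y,t-s)\,dy\le1$ into the last inequality gives $\phi(t)\le\lam K\|f\|_{\infty}\int_0^t\phi(s)\,ds$ on $(0,T')$, whence $\phi\equiv0$ by Gronwall's lemma; hence $u_1\le u_2$ on $\2{\Omega}\times(0,T')$, and letting $T'\nearrow T$ finishes the proof.

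The one genuinely delicate step will be the passage $\tau\to0$, that is, transferring the merely-$L^1$ initial condition into the integral identity; everything else is the classical linear maximum principle together with Gronwall. It is worth emphasizing that the bound $\sup_{\2{\Omega}\times[0,T']}u_i<1$ built into the definition of sub/supersolution is exactly what keeps the kernels $h_i$ bounded up to $t=\tau$ and legitimizes the Green-function representations above.
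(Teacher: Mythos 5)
Your argument is sound. Note first that the paper itself does not prove this lemma -- it is stated with the parenthetical ``(cf.\ Theorem 2.1 of \cite{H2})'' and is invoked as a known comparison result from the author's earlier work, so there is no internal proof to compare against. That said, the route you take (pass to the Duhamel/Green-function inequality by comparing each $u_i$ with the solution of the linear problem forced by $h_i=\lam f/(1-u_i)^2$, push the starting time $\tau\to 0$ to incorporate the merely-$L^1$ initial data, exploit the one-sided Lipschitz bound $\frac{1}{(1-u_1)^2}-\frac{1}{(1-u_2)^2}\le\frac{2}{(1-M)^4}(u_1-u_2)_+$ made possible by $0\le u_i\le M<1$, and close with Gronwall via $\phi(t)=\sup_{\2\Omega}(u_1-u_2)_+(\cdot,t)$ and $\int_\Omega G\,dy\le1$) is the standard and correct strategy for this kind of nonlinear comparison lemma, and each step you carry out is legitimate. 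The two small points worth being explicit about, which you only gesture at, are (i) the limit $\tau\to 0$ should be split as $\int[G(x,y,t-\tau)-G(x,y,t)]u_1(y,\tau)\,dy+\int G(x,y,t)[u_1(y,\tau)-u_{0,1}(y)]\,dy$, using $0\le u_1\le M$ for the first term and $\|u_1(\cdot,\tau)-u_{0,1}\|_{L^1}\to 0$ with $G(x,\cdot,t)$ bounded for the second; and (ii) $\phi$ is continuous on $(0,T')$ (being the max of a jointly continuous function over the compact set $\2\Omega$) and bounded by $M$, so Gronwall applies without further fuss. With those remarks made explicit, the proof is complete.
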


By an argument similar to the proof of Proposition 2.1 of \cite{GG3}
we have the following theorem.

\begin{lem}
Let $\Omega\subset\R^n$ be a bounded convex domain. Let $u_0$ satisfy
(4) for some constants $0<a<1$ and $b\ge 0$.
Suppose there exists a constant $\delta_0>0$ such that 
\begin{equation}
\left.u_0\right|_{\2{\Omega}_{\delta_0}}\in C^1(\2{\Omega}_{\delta_0})
\quad\mbox{ and }\quad\frac{\1 u_0}{\1\nu}<0\mbox{ on }\1\Omega 
\end{equation}
where $\1/\1\nu$ is differentiation with respect to the unit outward
normal $\nu$ on $\1\Omega$ and $u$ is the unique solution of ($P_{\lambda}$) 
in $\Omega\times (0,T)$ given by Theorem 1. Then there exists a constant
$\delta_1\in (0,\delta_0)$ such that for any $x\in\2{\Omega}_{\delta_1}$ if 
$y$ is the unique point on $\1\Omega$ such that the line segment 
$\2{xy}$ is perpendicular to $\1\Omega$ at $y$, then 
\begin{equation*}
\frac{\1}{\1\vec{n}}u(z,t)<0\quad\forall z\in\2{xy},0<t<T 
\end{equation*}
where $\1/\1\vec{n}$ is differentiation with respect to the unit vector
$\vec{n}$ along the direction $\vec{xy}$.
\end{lem}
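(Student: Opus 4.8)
\ni\emph{Proposed approach.} The plan is to run a localized moving-plane (reflection) argument at the boundary point in question, exploiting the fact that the nonlocal factor in ($P_{\lambda}$) is spatially constant. Fix the point $y$ from the statement, write $y_0=y$ and let $e=\nu(y_0)$ be the outward unit normal there, so that $\2{xy}$ lies on the inward normal ray from $y_0$ and the direction $\vec n$ in the statement equals $e$. For $z\in\R^n$ put $d(z)=(y_0-z)\cdot e$, and for $s>0$ let $\Sigma_s=\{z\in\Omega:d(z)<s\}$, let $R_s$ be the orthogonal reflection across the hyperplane $\{d=s\}$, and set $\Sigma_s'=R_s(\Sigma_s)$. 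The structural observation I would use is that $I(t):=\int_{\Omega}dz/(1-u(z,t))$ depends on $t$ only, so with $g(\xi,t):=\lam/((1-\xi)^2(1+\chi I(t))^2)$ equation ($P_{\lambda}$) reads $u_t=\Delta u+g(u,t)$; since $\Delta$ and $g(\cdot,t)$ are invariant under the isometry $R_s$, the function $v(z,t):=u(R_sz,t)$ solves the \emph{same} equation $v_t=\Delta v+g(v,t)$ on $\Sigma_s\times(0,T)$.

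The main steps would be: (i) \emph{Geometry.} Using that $\Omega$ is bounded, convex and $C^2$ (so $\Omega$ lies on one side of every tangent hyperplane and $\1\Omega$ satisfies a uniform interior ball condition), produce $\3_0\in(0,\delta_0)$, independent of $y_0$, with $\Sigma_s'\subset\Omega$ for all $0<s<\3_0$. (ii) \emph{Initial ordering.} Shrink to $\delta_1\in(0,\3_0)$ so that, by (6) together with $u_0=0$ on $\1\Omega$ (hence $\nabla u_0=(\1 u_0/\1\nu)\nu$ on $\1\Omega$ and $e\cdot\nabla u_0<0$ near $y_0$ by $C^1$-continuity), one has $u_0(R_sz)\ge u_0(z)$ for $z\in\Sigma_s$ and $0<s<\delta_1$. (iii) \emph{Comparison.} For $0<T'<T$ put $w=v-u$ on $\Sigma_s\times(0,T')$; it solves $w_t=\Delta w+c(z,t)w$ with $c=\1_\xi g(\theta,t)=2\lam/((1-\theta)^3(1+\chi I(t))^2)\ge0$ bounded on $[0,T']$ (since $u,v<1$ there). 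On the flat face $\{d=s\}\cap\Omega$ of $\1\Sigma_s$ one has $R_sz=z$, hence $w=0$; on $\1\Omega\cap\{d\le s\}$ one has $u=0$ and $w=u(R_sz,t)\ge0$ because $u\ge0$ in $\Omega$ (as $g>0$, $u$ is a supersolution of the heat equation, so $u\ge\int_\Omega G(\cdot,y,t)u_0(y)\,dy\ge0$ when $u_0\ge0$; for general $u_0$ as in (4) one first checks $u\ge0$ near $\1\Omega$ by a separate comparison); and $w\ge0$ at $t=0$ by (ii). The maximum principle gives $w\ge0$, and since $w\not\equiv0$ at $t=0$ the strong maximum principle gives $w>0$ in $\Sigma_s\times(0,T')$, i.e. $u(R_sz,t)>u(z,t)$ there.

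Finally (iv) \emph{passing to the derivative}: for $0<s<\delta_1$ the point $z^*=y_0-se$ is an interior point of $\Omega$ on the flat face $\{d=s\}$ at which $w$ attains its minimum $0$ over $\2{\Sigma_s}$, so Hopf's lemma applied to $w$ at $z^*$ gives $(-e)\cdot\nabla w(z^*,t)<0$; since $\nabla v(z^*,t)=M\,\nabla u(z^*,t)$ with $M$ the symmetric linear reflection ($Me=-e$), this rewrites as $e\cdot\nabla w(z^*,t)=-2\,e\cdot\nabla u(z^*,t)>0$, i.e. $\1 u/\1\vec n\,(z^*,t)=e\cdot\nabla u(z^*,t)<0$. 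As $s$ varies over $(0,\delta_1)$ the point $z^*$ sweeps $\2{xy}$ minus its endpoints (provided $|x-y|=d(x)<\delta_1$, which fixes $\delta_1$), and the endpoints follow by letting $s\searrow0$ and $s\nearrow d(x)$, using continuity of $\nabla u$ up to $\1\Omega$ near $y_0$ (parabolic regularity, $u_0\in C^1$ there) and $\1 u/\1\nu<0$ on $\1\Omega$ (Hopf); since $T'<T$ is arbitrary this holds on all of $(0,T)$. The hard part, I expect, is step (i): making the claim that small caps reflect into $\Omega$ precise and uniform in $y_0$ --- this is exactly where convexity of $\Omega$ is needed --- together with the bookkeeping in (iii) that guarantees $u\ge0$ (hence $w\ge0$) on the curved portion of $\1\Sigma_s$; once those are in hand, steps (ii)--(iv) are routine applications of the maximum principle made possible by the spatial independence of the nonlocal term.
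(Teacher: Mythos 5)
Your reflection/moving‐plane argument is in fact the approach the paper intends: the paper offers no detailed proof of this lemma and simply cites Proposition~2.1 of Ghoussoub--Guo (reference [GG3]), which is itself a boundary moving‐plane argument in the style of Friedman--McLeod. Your key structural observation --- that the nonlocal factor $\bigl(1+\chi\int_\Omega dz/(1-u)\bigr)^{-2}$ depends on $t$ alone, so the reaction term $g(u,t)$ is invariant under the reflection $R_s$ and the GG3 argument transfers verbatim --- is exactly the point that makes the citation legitimate for $(P_\lambda)$. Steps (i), (ii) and (iv), including the Hopf‐lemma computation $e\cdot\nabla w(z^*,t)=-2\,e\cdot\nabla u(z^*,t)>0$, are all sound and match the standard treatment.

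The one real loose end is the one you flag yourself but do not close: on the curved portion $\partial\Omega\cap\{d\le s\}$ of $\partial\Sigma_s$ you need $w=u(R_sz,t)\ge 0$ for \emph{all} $0<t<T'$, and your justification ($u\ge\int_\Omega G(\cdot,y,t)u_0\,dy\ge 0$) only works when $u_0\ge 0$. Hypothesis (4) permits $u_0\ge -b$ with $b>0$, and the ``separate comparison'' you allude to is not automatic: since parabolic evolution has infinite propagation speed, a negative interior part of $u_0$ can push $u$ slightly below zero near $\partial\Omega$ for small $t$, and a crude lower bound such as $g(u,t)\ge g_0>0$ on $[0,T']$ does not by itself dominate the negative heat contribution from $u_0$. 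The cleanest repair is to observe that every application of this lemma in the paper (Corollaries 4--5 and Theorems 6, 8, 9) is made with $b=0$, so one may as well state and prove it for $0\le u_0\le a$; if one insists on $b>0$ one should add the hypothesis that $u\ge 0$ on a collar (or supply an honest barrier argument). This gap is inherited from the citation --- Proposition 2.1 of [GG3] is also stated for nonnegative data --- so you are not missing anything the paper actually proves, but it deserves to be called out rather than deferred.
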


By Lemma 3 and an argument similar to the proof of Proposition 1.3
of \cite{H3} and Propsotion 2.1 of \cite{G} we have the following 
two corollaries.

\begin{cor}
Let $\Omega\subset\R^n$ be a bounded convex domain. Let $u_0$ satisfy
(4) and (6) for some constants $0<a<1$, $b\ge 0$,  and $\delta_0>0$. 
Suppose $u$ is the unique solution of ($P_{\lambda}$) in $\Omega\times 
(0,T)$ given by Theorem 1. Let $\delta_1$ be as in Theorem 3. Then 
there exist $a_1>0$ and $\delta\in (0,\delta_1)$ such that for any 
$y\in\Omega_{\delta}$ there exists a fixed-sized cone $\Gamma(y)\subset
\Omega_{2\delta}$ with vertex at $y$ such that $|\Gamma(y)\setminus 
\Omega_{\delta}|\ge a_1$ and $u(z,t)\ge u(y,t)$ for any $z\in \Gamma(y)$
and $0<t<T$. Moreover 
\begin{equation}
\int_{\Omega_{\delta}}\frac{dx}{(1-u(x,t))^2}
\le\frac{|\Omega|}{a_1}\int_{\Omega\setminus\Omega_{\delta}}
\frac{dx}{(1-u(x,t))^2}\quad\forall 0<t<T.
\end{equation}
\end{cor}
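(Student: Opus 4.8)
The plan is to derive the Harnack-type integral inequality (7) from the geometric cone statement established in Lemma 3. The key observation is that Lemma 3 tells us that along each perpendicular segment $\2{xy}$ (with $x\in\2\Omega_{\delta_1}$ and $y\in\1\Omega$ the foot of the perpendicular), the solution $u(\cdot,t)$ is monotone in the inward normal direction. This monotonicity, combined with the convexity of $\Omega$, should let us construct for each $y\in\Omega_\delta$ a cone $\Gamma(y)$ opening \emph{inward} from $y$ into $\Omega_{2\delta}$, of a fixed aperture and fixed height depending only on $\Omega$ and $\delta_1$, such that every point $z\in\Gamma(y)$ satisfies $u(z,t)\ge u(y,t)$ for all $t\in(0,T)$.

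First I would fix the geometry: choose $\delta\in(0,\delta_1)$ small enough (say $\delta\le\delta_1/4$) that the tubular-neighborhood coordinates $(y',s)\mapsto y'-s\nu(y')$ for $y'\in\1\Omega$, $0\le s\le 2\delta$, are a valid diffeomorphism onto $\2\Omega_{2\delta}$, using the $C^2$ regularity of $\1\Omega$. Then for $y\in\Omega_\delta$ with normal projection $y'\in\1\Omega$ and depth $s_0=\mathrm{dist}(y,\1\Omega)<\delta$, I would take $\Gamma(y)$ to be a cone with vertex $y$, axis along $-\nu(y')$, fixed half-angle $\theta_0$ (chosen once and for all, small enough that the whole cone up to height $\delta$ stays inside $\Omega_{2\delta}$ by convexity), and height $\delta$. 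For any $z\in\Gamma(y)$, the segment from $y$ to $z$ lies in $\Omega_{2\delta}$, and by Lemma 3 applied along the perpendicular segments through the relevant boundary points — together with the maximum principle controlling the behavior transverse to the normal — one gets $u(z,t)\ge u(y,t)$. The measure estimate $|\Gamma(y)\setminus\Omega_\delta|\ge a_1$ for a fixed $a_1>0$ is then elementary: the portion of the cone at depth between $\delta$ and $\delta+\tfrac{1}{2}\delta\cos\theta_0$ has volume bounded below by a constant depending only on $\theta_0$ and $\delta$.

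Given the pointwise comparison, inequality (7) follows by a change-of-variables argument. For each $y\in\Omega_\delta$,
\[
\frac{1}{(1-u(y,t))^2}\,|\Gamma(y)\setminus\Omega_\delta|
\le\int_{\Gamma(y)\setminus\Omega_\delta}\frac{dz}{(1-u(z,t))^2}
\le\int_{\Omega\setminus\Omega_\delta}\frac{dz}{(1-u(z,t))^2},
\]
so $(1-u(y,t))^{-2}\le a_1^{-1}\int_{\Omega\setminus\Omega_\delta}(1-u(z,t))^{-2}\,dz$. Integrating this over $y\in\Omega_\delta$ and bounding $|\Omega_\delta|\le|\Omega|$ gives (7). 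The one technical point requiring care is making the cone aperture and the constant $\delta$ uniform over all $y\in\Omega_\delta$; this is where convexity of $\Omega$ and the uniform lower bound on the radius of curvature of $\1\Omega$ (from $C^2$) enter, ensuring the cones never poke outside $\Omega_{2\delta}$.

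**The main obstacle** I expect is establishing the pointwise inequality $u(z,t)\ge u(y,t)$ for $z$ in the \emph{interior} of the cone rather than merely on the axis. Lemma 3 only gives monotonicity along normals through boundary points; to reach an off-axis point $z$ one must either reparametrize $z$ as lying on the normal through a \emph{different} boundary point $y''\in\1\Omega$ (valid since $z\in\Omega_{2\delta}$) and then compare $u(y,t)$ with $u(y'',t)$ via an intermediate point, or use a more careful moving-plane / comparison argument exploiting convexity. The referenced Proposition 1.3 of \cite{H3} and Proposition 2.1 of \cite{G} presumably carry out exactly this reduction, so I would follow their template, taking care that the nonlocal coefficient $(1+\chi\int_\Omega(1-u)^{-1})^{-2}$ — being spatially constant at each fixed time — does not disturb the maximum-principle comparisons.
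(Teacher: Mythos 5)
Your second half---deriving the integral inequality (7) from the pointwise cone comparison by integrating
\[
\frac{a_1}{(1-u(y,t))^2}\le\frac{|\Gamma(y)\setminus\Omega_{\delta}|}{(1-u(y,t))^2}
\le\int_{\Gamma(y)\setminus\Omega_{\delta}}\frac{dz}{(1-u(z,t))^2}
\le\int_{\Omega\setminus\Omega_{\delta}}\frac{dz}{(1-u(z,t))^2}
\]
over $y\in\Omega_{\delta}$ and bounding $|\Omega_{\delta}|\le|\Omega|$---is correct and is exactly the intended reduction. The measure bound $|\Gamma(y)\setminus\Omega_{\delta}|\ge a_1$, uniform in $y$, is also fine given the $C^2$ boundary and convexity.

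The gap is in the first half, and it is more serious than you indicate. Your main plan (``Lemma 3 applied along the perpendicular segments together with the maximum principle controlling the behavior transverse to the normal'') does not produce the off-axis comparison $u(z,t)\ge u(y,t)$: the maximum principle gives no a priori ordering of $u$ at two interior points that are not related by a comparison structure. Your first proposed repair---writing $z=y''-s\,\nu(y'')$ on the normal through some other boundary point $y''$ and ``comparing via an intermediate point''---also fails: $y$ generically lies on a different normal than $z$, Lemma 3 gives no relation between values of $u$ on two distinct normals, and $u(y'',t)=0$ so going through the boundary point gives the wrong inequality. Monotonicity along each normal alone cannot be assembled into a cone estimate.

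What the cited Proposition 2.1 of \cite{G} (Ghoussoub--Guo) and Proposition 1.3 of \cite{H3} actually do, following Gidas--Ni--Nirenberg, is run the parabolic moving plane argument over a whole \emph{cone} of reflection directions, not merely the inward normal. For each unit vector $e$ that makes a sufficiently small angle with some inward boundary normal, one slides the hyperplane $\{x\cdot e=\lambda\}$ from the supporting position and shows $u(x,t)\le u(x_{\lambda},t)$ in the reflected cap; this yields $\partial u/\partial e\ge 0$ on a boundary strip whose width is uniform in $e$ by compactness of the sphere of admissible directions, together with the $C^2$ regularity and convexity of $\Omega$. Taking the union over all admissible $e$ gives, for each $y\in\Omega_{\delta}$, a cone $\Gamma(y)$ of fixed aperture along which $u$ is nondecreasing, which is exactly the pointwise estimate you need. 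Lemma 3 is in effect the special case $e=-\nu(y')$ of this statement, so citing Lemma 3 as the engine inverts the logical order. Your second suggested route (``a more careful moving-plane argument exploiting convexity'') is the right one and should be made the centerpiece of the proof rather than a fallback; the nonlocal factor $A(t)=\bigl(1+\chi\int_{\Omega}(1-u)^{-1}\bigr)^{-2}$, being spatially constant, is indeed harmless for the moving plane comparisons, as you note.
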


\begin{cor}
Let $\Omega\subset\R^n$ be a bounded convex domain. Let $u_0$ satisfy
(4) and (6) for some constants $0<a<1$, $b\ge 0$,  and $\delta_0>0$. 
Suppose $u$ is the unique solution of ($P_{\lambda}$) in $\Omega\times 
(0,T)$ given by Theorem 1 such that $u$ touchdown at time $T$. Then
the set of touchdown points for $u$ is a compact subset of $\Omega$.
\end{cor}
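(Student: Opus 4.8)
The aim is to show the touchdown set $\Sigma$ of $u$ is closed in $\Omega$ and that $\mbox{dist}(\Sigma,\1\Omega)>0$; together these make $\Sigma$ a compact subset of $\Omega$. Closedness in $\Omega$ is routine: if $x^{(j)}\in\Sigma$ and $x^{(j)}\to x^*\in\Omega$, a diagonal choice from the sequences witnessing $x^{(j)}\in\Sigma$ gives $(z_k,\tau_k)\in\Omega\times(0,T)$ with $z_k\to x^*$ and $u(z_k,\tau_k)\to1$, so $x^*\in\Sigma$. The real task is therefore to find $\delta>0$ with $\sup_{\Omega_\delta\times(0,T)}u<1$: if $1-u\ge\eta>0$ on $\Omega_\delta\times(0,T)$ then, since $\mbox{dist}(\cdot,\1\Omega)$ is continuous, no $x$ with $\mbox{dist}(x,\1\Omega)<\delta$ can be a touchdown point, and hence $\mbox{dist}(\Sigma,\1\Omega)\ge\delta$.

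For this bound I would use the nonlocal factor. With $M(t):=1+\chi\int_\Omega(1-u(y,t))^{-1}\,dy\ge1$ one has
$$
u_t=\Delta u+\frac{\lam}{(1-u)^2M(t)^2}\le\Delta u+\frac{\lam}{(1-u)^2}\qquad\mbox{in }\Omega\times(0,T),
$$
so $u$ (or $\max(u,0)$ if $u_0$ is not nonnegative, since $0$ is a subsolution of ($P_{\lambda}'$) with $f\equiv1$) is a subsolution of the local MEMS problem ($P_{\lambda}'$) with $f\equiv1$. Let $v$ be the solution of ($P_{\lambda}'$) with $f\equiv1$ and initial value $\max(u_0,0)\in[0,a]$, and $T_v$ its touchdown time; by Lemma 2, $u\le v$ on $\2\Omega\times(0,\min(T,T_v))$. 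Since $\Omega$ is convex and $u_0$ satisfies (4) and (6), the compactness of the touchdown set of the local problem ($P_{\lambda}'$) with $f\equiv1$, which is what the arguments of Proposition 1.3 of \cite{H3} and Proposition 2.1 of \cite{G} give, applies to $v$; consequently its touchdown set is a compact subset of $\Omega$ and $v\le1-\eta_0$ on $\Omega_{\delta_0}\times(0,T_v)$ for some $\delta_0,\eta_0>0$, whence $u\le1-\eta_0$ on $\Omega_{\delta_0}\times(0,\min(T,T_v))$.

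If $T_v\ge T$ the proof is finished. In general $T_v<T$, and the estimate must be propagated past $T_v$; this is the main obstacle. I would restart the comparison at $t=T_v$: the function $u(\cdot,T_v)$ is continuous on $\2\Omega$, smooth in $\Omega$ and $C^1$ up to $\1\Omega$, with $\sup_{\2\Omega}u(\cdot,T_v)<1$ and $u(\cdot,T_v)=0$ on $\1\Omega$, and by Lemma 3 it has $\1 u(\cdot,T_v)/\1\nu<0$ on $\1\Omega$, so that (4) and (6) again hold at time $T_v$ and the previous step repeats on $(T_v,T)$. Iteration then yields times $T_v=\tau_0<\tau_1<\cdots\nearrow\tau_\infty\le T$ with $u\le1-\eta_j$ on $\Omega_{\delta_j}\times(\tau_{j-1},\tau_j)$. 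One checks $\tau_\infty=T$, since otherwise $u(\cdot,\tau_j)\to u(\cdot,\tau_\infty)$ uniformly on $\2\Omega$ with $\sup_{\2\Omega}u(\cdot,\tau_\infty)<1$, so that $\tau_{j+1}-\tau_j$, being at least $(1-\sup_{\2\Omega}u(\cdot,\tau_j))^3/(3\lam)$ (a crude lower bound for the touchdown time of ($P_{\lambda}'$), from comparison of $\sup u$ with the ODE $\dot\psi=\lam/(1-\psi)^2$), stays bounded away from $0$, contradicting $\tau_{j+1}-\tau_j\to0$; and one checks that the $\delta_j,\eta_j$ do not degenerate, because on a region where $u$ is already known to be bounded away from $1$ the forcing $\lam/(1-u)^2M(t)^2$ is bounded, so parabolic estimates give uniform $C^1$ control and a uniform Hopf inequality near $\1\Omega$, which is what makes the local compactness estimate quantitative and uniform along the iteration. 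Then $\delta:=\inf_j\delta_j>0$ and $\eta:=\inf_j\eta_j>0$ work. Alternatively one can skip the comparison and adapt the proofs of Proposition 1.3 of \cite{H3} and Proposition 2.1 of \cite{G} to ($P_{\lambda}$) directly, using Lemma 3 and the cone property and estimate (7) of Corollary 4; the only new point is that the factor $M(t)^{-2}\le1$ merely decreases the forcing and leaves the relevant inequalities intact.
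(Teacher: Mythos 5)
Your reduction to showing $\mbox{dist}(\Sigma,\partial\Omega)>0$ is fine, and the final sentence of your proposal is in fact the paper's proof: the paper invokes Lemma 3 and Corollary 4 (whose estimate (7) is the fixed-size cone estimate) and says compactness of the touchdown set follows ``by an argument similar to the proof of Proposition 1.3 of \cite{H3} and Proposition 2.1 of \cite{G}.'' The point, exactly as you note, is that the nonlocal factor $(1+\chi\int_\Omega(1-u)^{-1}\,dy)^{-2}\le 1$ only reduces the forcing, so every inequality in the local compactness argument carries over verbatim to $(P_\lambda)$. You should have led with this observation rather than relegating it to an afterthought.

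The comparison-with-the-local-problem scheme that occupies the bulk of your write-up has a genuine gap in the iteration step. The first window $(0,T_v)$ is fine, but past $T_v$ you restart and extract, from the local compactness result applied to each comparison solution $v_j$ with initial datum $u(\cdot,\tau_{j-1})$, a strip width $\delta_j$ and a gap $\eta_j$. Those constants depend quantitatively on the initial datum of that window — in particular on $\sup_{\overline{\Omega}}u(\cdot,\tau_{j-1})$, which tends to $1$ as $\tau_j\nearrow T$ because $u$ quenches at $T$ — and nothing in your argument prevents $\delta_j\searrow 0$ or $\eta_j\searrow 0$. Your stated justification, that ``on a region where $u$ is already known to be bounded away from $1$ the forcing is bounded, so parabolic estimates give uniform $C^1$ control and a uniform Hopf inequality,'' is circular: the region ``already known'' is $\Omega_{\delta_j}$ with bound $\eta_j$, which is precisely what you need to keep uniform; and if $\eta_j\searrow 0$ the forcing bound $\lambda/\eta_j^2$ on that strip blows up, destroying the uniformity of the parabolic estimate you are relying on. To close this you would need a $j$-independent monotonicity strip and a $j$-independent cone estimate for $u$ itself on all of $(0,T)$ — but that is exactly what Lemma 3 and Corollary 4 already supply directly, so the detour through the comparison solutions is both unnecessary and strictly harder to justify than the direct adaptation.
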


By (7) and an argument similar to the proof of Theorem 4.3 of \cite{H3}
we have the following theorem. 

\begin{thm}
Let $\Omega\subset\R^n$ be a bounded convex domain. Let $u_0$ satisfy
(4) and (6) for some constants $0<a<1$, $b=0$, and $\delta_0>0$. 
Let $\chi>0$. Then there exists a constant $C_1>0$ and such 
that for any $\lambda>\lambda_0=C_1\mu_1$ and any solution $u$ of 
($P_{\lambda})$, $u$ quenches in a finite time 
\begin{equation*}
T_{\lambda}\le\frac{C_1}{\lambda-\lambda_0}.
\end{equation*}
\end{thm}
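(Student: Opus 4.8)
The plan is to derive a differential inequality for a suitably chosen weighted functional of $u$ and show that it forces quenching in finite time when $\lambda$ is large. Since ($P_\lambda$) differs from ($P_\lambda'$) only through the nonlocal factor $(1+\chi\int_\Omega dy/(1-u))^{-2}$, the strategy of the proof of Theorem 4.3 of \cite{H3} (itself modeled on the eigenfunction method for MEMS blow-up/quenching) should go through once we control that nonlocal term from above by a multiple of $\int_{\Omega}dy/(1-u)$ restricted to the bulk, which is exactly what Corollary 4 provides. Concretely, let $\phi_1>0$ be the first Dirichlet eigenfunction of $-\Delta$ on $\Omega$ with eigenvalue $\mu_1>0$, normalized so that $\int_\Omega \phi_1\,dx=1$, and set
\begin{equation*}
E(t)=\int_\Omega (1-u(x,t))\,\phi_1(x)\,dx.
\end{equation*}
Then $0<E(t)\le \max(1+b,\,\|\phi_1\|_\infty|\Omega|)$ and, using the equation, $E'(t)=-\mu_1 E(t)-\lambda\int_\Omega \phi_1(x)\,dx/\big[(1-u)^2(1+\chi\int_\Omega dy/(1-u))^2\big]$.

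Next I would bound the source term from below. By Jensen's inequality applied to the convex function $s\mapsto s^{-2}$ against the probability measure $\phi_1\,dx$,
\begin{equation*}
\int_\Omega \frac{\phi_1(x)}{(1-u(x,t))^2}\,dx\ge \left(\int_\Omega \frac{\phi_1(x)}{1-u(x,t)}\,dx\right)^2\ge \left(\int_\Omega \phi_1(x)(1-u(x,t))\,dx\right)^{-2}=\frac{1}{E(t)^2},
\end{equation*}
the middle step again by Jensen (or Cauchy--Schwarz). For the nonlocal factor I use Corollary 4: choosing $\delta\in(0,\delta_1)$ as there, $\int_{\Omega_\delta}dx/(1-u)^2\le (|\Omega|/a_1)\int_{\Omega\setminus\Omega_\delta}dx/(1-u)^2$, which combined with $\int_\Omega dx/(1-u)\le |\Omega|^{1/2}(\int_\Omega dx/(1-u)^2)^{1/2}$ and the fact that $\phi_1$ is bounded below by a positive constant on the compact set $\2{\Omega\setminus\Omega_\delta}$ yields a bound of the form $1+\chi\int_\Omega dy/(1-u)\le C_2\,(1+\int_\Omega \phi_1/(1-u)^2\,dx)^{1/2}$ for some $C_2=C_2(\Omega,\chi,\delta,a_1)$. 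Feeding this back gives
\begin{equation*}
E'(t)\le -\mu_1 E(t)-\frac{\lambda}{C_2^2}\cdot\frac{\int_\Omega \phi_1/(1-u)^2\,dx}{1+\int_\Omega \phi_1/(1-u)^2\,dx}\le -\mu_1 E(t)-\frac{\lambda}{C_2^2}\cdot\frac{E(t)^{-2}}{1+E(t)^{-2}}.
\end{equation*}

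Finally I would close the argument by an ODE comparison. Write $y(t)=E(t)$; the above reads $y'\le -\mu_1 y-(\lambda/C_2^2)\,(1+y^2)^{-1}$. As long as $y$ stays bounded below by a positive constant $m_0$ (equivalently $\sup_\Omega u$ stays bounded away from $1$ on a set of positive $\phi_1$-measure, which is exactly what fails at the quenching time), we have $y'\le -\mu_1 m_0-(\lambda/C_2^2)(1+\|1-u\|_{L^1(\phi_1)}^2)^{-1}\le -c(\lambda-\lambda_0)$ for $\lambda>\lambda_0:=C_1\mu_1$ with $C_1$ absorbing all the geometric constants; since $y(0)$ is bounded above independently of $\lambda$, $y$ must reach the threshold $m_0$ — and hence $u$ must quench — by a time $T_\lambda\le C_1/(\lambda-\lambda_0)$ after rescaling the constants. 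The main obstacle is the bookkeeping in the second paragraph: one must verify that Corollary 4 really lets the nonlocal denominator be absorbed into the same functional $E(t)$ that controls the local MEMS term, uniformly in $t\in(0,T)$ and with constants independent of $\lambda$; the hypothesis $b=0$ (so $u_0\ge 0$, hence $u\ge 0$ and $1-u\le 1$) is what makes the comparison with the purely local estimates of \cite{H2},\cite{H3} legitimate and keeps $E(t)$ from degenerating.
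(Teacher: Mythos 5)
Your overall strategy coincides with the paper's: the paper proves Theorem~6 by combining inequality~(7) from Corollary~4 with the eigenfunction argument of Theorem~4.3 of \cite{H3}, which is exactly what you reconstruct (Jensen/Cauchy--Schwarz against the $\phi_1$-weighted functional, with Corollary~4 supplying control of the nonlocal denominator in terms of $\int_\Omega\phi_1/(1-u)^2$). So the route is right.

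However, your computation of $E'(t)$ drops a boundary term with the wrong sign, and this is not cosmetic. With $E(t)=\int_\Omega(1-u)\phi_1\,dx$ and $\int_\Omega\phi_1=1$, integration by parts gives
\begin{equation*}
\int_\Omega\Delta(1-u)\,\phi_1\,dx=-\mu_1 E(t)-\int_{\partial\Omega}\frac{\partial\phi_1}{\partial\nu}\,dS=-\mu_1E(t)+\mu_1,
\end{equation*}
because $1-u\equiv 1$ on $\partial\Omega$ (whereas $\phi_1=0$ there), and $-\int_{\partial\Omega}\partial_\nu\phi_1=\mu_1\int_\Omega\phi_1=\mu_1$. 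Hence the correct identity is $E'(t)=\mu_1\bigl(1-E(t)\bigr)-\lambda A(t)\int_\Omega\phi_1(1-u)^{-2}\,dx$, not $E'(t)=-\mu_1E(t)-\cdots$. The difference matters: your version has the $\mu_1$ term \emph{aiding} decay, which would yield finite quenching for every $\lambda>0$ — false, since for small $\lambda$ the problem has global solutions. The missing $+\mu_1$ is precisely the obstruction that forces the threshold $\lambda_0=C_1\mu_1$. With the corrected identity and your bound $1+\chi\int_\Omega dy/(1-u)\le C_2(1+J)^{1/2}$ with $J=\int_\Omega\phi_1(1-u)^{-2}\,dx$ (from Corollary~4, Cauchy--Schwarz, and $\phi_1\ge c_\delta>0$ on $\Omega\setminus\Omega_\delta$), together with Jensen $J\ge E^{-2}$ and $0<E\le 1$, one gets $\lambda A(t)J\ge\lambda/(2C_2^2)$, so $E'(t)\le\mu_1-\lambda/(2C_2^2)=-(\lambda-\lambda_0)/C_1$ with $C_1=2C_2^2$, $\lambda_0=C_1\mu_1$, whence $T_\lambda\le C_1/(\lambda-\lambda_0)$ since $E(0)\le1$ and $E>0$. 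Once you repair the sign, the argument closes as the paper intends.
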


\begin{lem}
Let $\lambda>0$, $\chi>0$, and $\Omega\subset\R^n$ be a bounded
domain such that $B_R\subset\Omega$ for some constant $R>0$. Let 
$0\le u_0\le a$ in $\Omega$ for some constant $0<a<1$ and let 
$0\le u<1$ be a solution of $(P_{\lambda})$ in $\Omega\times 
(0,T)$ with
\begin{equation*}
\sup_{0\le t<T}\int_{\Omega}\frac{dz}{1-u(z,t)}<\infty.
\end{equation*} 
Let 
\begin{equation}
A(t)=\biggl(1+\chi\int_{\Omega}\frac{dz}{1-u(z,t)}\biggr)^{-2}
\end{equation} 
and $0<\delta_1\le\inf_{0\le t<T}A(t)$.
Suppose $\lambda>2n/\delta_1R^2$. Then $T$ satisfies
\begin{equation}
T\le\frac{1}{\lambda\delta_1}\biggl(1-\frac{2n}{\lambda\delta_1R^2}
\biggr)^{-1}.
\end{equation}
\end{lem}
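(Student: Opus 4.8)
The plan is to reduce the nonlocal problem on $\Omega$ to a \emph{local} problem on the sub‑ball $B_R$ and then compare $u$ from below with an explicit separated‑variables subsolution that reaches the value $1$ in a finite, computable time. The starting observation is that the nonlocal coefficient is controlled from below: since $0\le u<1$ and $A(t)\ge\delta_1$ for $0\le t<T$, the restriction of $u$ to $B_R\times(0,T)$ satisfies $u_t=\Delta u+\lam A(t)/(1-u)^2\ge\Delta u+\lam\delta_1/(1-u)^2$ in $B_R$, with $u\ge0$ on $\1 B_R$ (note $u\ge0$ throughout $\2\Omega$) and $u(\cdot,t)\to u_0$ in $L^1(B_R)$, $0\le u_0\le a<1$. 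Thus $u\big|_{B_R}$ is a nonnegative supersolution of $(P_{\lam\delta_1}')$ on $B_R$ with dielectric profile $f\equiv1$ and initial value $u_0$.

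Next I would construct the comparison subsolution. Put $c=\lam\delta_1-\dfrac{2n}{R^2}>0$ (positive precisely because $\lam>2n/(\delta_1R^2)$) and define, for $0<t<1/c$,
$$\5 u(x,t)=\frac{ct}{R^2}\,\bigl(R^2-|x|^2\bigr),\qquad x\in\2 B_R .$$
Then $\5 u=0$ on $\1 B_R$, $\5 u(\cdot,0)=0\le u_0$, and $0\le\5 u<1$ on $B_R\times(0,1/c)$. Since $\Delta\5 u=-2nct/R^2$ and $\5 u_t=c(R^2-|x|^2)/R^2\le c$, one gets for all $0<t<1/c$
$$\5 u_t-\Delta\5 u\le c+\frac{2nct}{R^2}=c\Bigl(1+\frac{2nt}{R^2}\Bigr)\le c\Bigl(1+\frac{2n}{cR^2}\Bigr)=c+\frac{2n}{R^2}=\lam\delta_1\le\frac{\lam\delta_1}{(1-\5 u)^2},$$
so $\5 u$ is a nonnegative subsolution of $(P_{\lam\delta_1}')$ with $f\equiv1$ on $B_R\times(0,1/c)$ with initial value $0$.

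Then I would apply the comparison Lemma 2 with $\Omega$ there replaced by $B_R$, $\lam$ replaced by $\lam\delta_1$, $f\equiv1$, and $u_{0,1}=0\le u_{0,2}=u_0\le a$, to conclude $\5 u\le u$ on $\2 B_R\times\bigl(0,\min(T,1/c)\bigr)$. Suppose, for contradiction, that $T>1/c$. Then for $t\nearrow1/c$ we would have $\sup_{\Omega}u(\cdot,t)\ge u(0,t)\ge\5 u(0,t)=ct\to1$, contradicting $\sup_{\2\Omega\times[0,T']}u<1$ for any $T'\in(1/c,T)$. Hence
$$T\le\frac1c=\frac{1}{\lam\delta_1-2n/R^2}=\frac{1}{\lam\delta_1}\Bigl(1-\frac{2n}{\lam\delta_1R^2}\Bigr)^{-1},$$
the last equality being elementary algebra, which is the claim.

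The only genuinely delicate part is the bookkeeping around the comparison step: one must check that Lemma 2 does apply with the ball $B_R$ in place of $\Omega$ (it does, $B_R$ being a smooth bounded domain with $B_R\subset\Omega$), that $u\big|_{B_R}$ really qualifies as a supersolution of the modified local problem in the precise sense of the definition used there (differential inequality, sign on $\1 B_R$, $\sup<1$ on compact time intervals, $L^1$ initial trace), and that the comparison is run on the interval $(0,\min(T,1/c))$ where $\5 u<1$ is strictly maintained, so that only at the endpoint $t=1/c$ does $\5 u$ force $u$ up to $1$. Everything else — the differential‑inequality check for $\5 u$ and the final algebra — is routine.
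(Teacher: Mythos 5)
Your proof is correct and is essentially the paper's argument in different notation: the paper sets $v=1-u$ and compares with the supersolution $\psi(x,t)=1-\lambda\delta_1c_0t\bigl(1-|x|^2/R^2\bigr)$ of $\psi_t=\Delta\psi-\lambda\delta_1\psi^{-2}$ on $B_R$, where $\lambda\delta_1c_0$ is exactly your $c$, and $1-\psi$ is exactly your $\underline{u}$. Lowering the nonlocal coefficient via $A(t)\ge\delta_1$, invoking Lemma 2 on the ball, and deriving the contradiction at $t=1/c$ all match the paper step for step.
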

\begin{proof}
Let $v=1-u$ and $v_0=1-u_0$. Then $v$ satisfies 
\begin{equation}
\left\{\begin{aligned}
v_t&=\Delta v-\lambda A(t)v^{-2}\quad\mbox{ in }\Omega\times (0,T)\\
v(x,t)&=1\qquad\qquad\qquad\quad\mbox{ on }\1\Omega\times (0,T)\\
v(x,0)&=v_0\qquad\qquad\qquad\,\,\,\mbox{ in }\Omega.
\end{aligned}\right.
\end{equation} 
Let
$$
\psi (x,t)=1-\lambda\delta_1c_0t\biggl (1-\frac{|x|^2}{R^2}\biggr )
$$
where
\begin{equation*}
c_0=1-\frac{2n}{\lambda\delta_1R^2}.
\end{equation*} 
Then
\begin{equation}
\frac{2n}{R^2}\cdot\frac{1}{\lambda\delta_1 c_0}=c_0^{-1}-1.
\end{equation} 
Hence by (11),
\begin{equation*}
\psi_t-\Delta\psi+\lambda\delta_1\psi^{-2}
\ge \lambda\delta_1c_0[-1-(2nt/R^2)+c_0^{-1}]\ge 0
\quad\mbox{ in }Q_R^{T_1}
\end{equation*} 
where $T_1=\min (T,1/(\lambda\delta_1 c_0))$. Hence $\psi$ is a 
supersolution of
\begin{equation}
\left\{\begin{aligned}
\psi_t&=\Delta\psi-\lambda\delta_1\psi^{-2}\quad\mbox{ in }Q_R^{T_1}\\
\psi&=1\qquad\qquad\qquad\mbox{ on }\1 B_R\times [0,T_1)\cap
\2{B}_R\times\{0\}.
\end{aligned}\right.
\end{equation} 
Suppose $T>1/(\lambda\delta_1 c_0)$. Since $v$ is a subsolution 
of (12), by Lemma 2,
\begin{align*}
&0<v\le\psi\qquad\mbox{ in }\2{Q_R^{T_1}}\\
\Rightarrow\quad&0<v (0,1/(\lambda\delta_1 c_0))\le\psi 
(0,1/(\lambda\delta_1 c_0))=0.
\end{align*}
Contradiction arises and (9) follows.
\end{proof}

\begin{thm}
Let $0\le u_0\le a$ be a radially symmetric function in $B_R\subset\R^n$ 
for some constant $0<a<1$ which is monotone decreasing in $0\le r\le R$. 
Let $T>0$. Suppose $u$ is the unique solution of ($P_{\lambda}$) in 
$Q_R^T$ given by Theorem 1 and $u$ touches down at time $T$. Then 
there exist constants $t_0\in (0,T)$ and $C>0$ depending on 
$\lambda$ and $\chi$ such that (1) holds.
Hence $x=0$ is the only quenching point of $u$ at time $T$. Moreover 
for $n\ge 3$ we have 
\begin{equation}
\sup_{0\le t<T}\int_{B_R}\frac{dz}{1-u(z,t)}<\infty.
\end{equation}
\end{thm}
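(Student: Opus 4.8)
My plan is to run a Friedman--McLeod type argument on the radial derivative $u_r$. By uniqueness in Theorem 1 and the rotation invariance of $(P_\lambda)$ the solution $u$ is radially symmetric, and the first step is to show $u_r\le 0$ in $Q_R^T$. Writing $g(u,t)=\lambda A(t)(1-u)^{-2}$ with $A(t)=(1+\chi\int_{B_R}dz/(1-u(z,t)))^{-2}$, the function $w=u_r$ solves $w_t=w_{rr}+\frac{n-1}{r}w_r+(g_u-\frac{n-1}{r^2})w$; on each strip $\{t\le T-\delta\}$ the zeroth--order coefficient is bounded above because $\sup_{\overline{B_R}\times[0,T-\delta]}u<1$ by the definition of a solution, the lateral data satisfy $w(0,t)=0$ and $w(R,t)=\partial_\nu u(R,t)\le 0$ (since $u\ge 0$ in $B_R$ and $u=0$ on $\partial B_R$ for $t>0$), and $w(\cdot,0)\le 0$ by the monotonicity of $u_0$ (approximating $u_0$ in $L^1$ by smooth radially decreasing data if necessary). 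The maximum principle then gives $w\le 0$ on $\{t\le T-\delta\}$, and letting $\delta\to 0$ yields $u_r\le 0$ in $Q_R^T$.

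Fix $t_0\in(0,T)$. I next control $u_r$ on the parabolic boundary of $B_R\times(t_0,T)$. Set $\widehat w=u_r/r$; a direct computation gives $\widehat w_t=\widehat w_{rr}+\frac{n+1}{r}\widehat w_r+g_u\widehat w$, i.e. $\widehat w$ solves the radial heat equation in dimension $n+2$ with a nonnegative potential that is bounded on each strip $\{t\le T-\delta\}$. Since $\widehat w\le 0$ and $\widehat w\not\equiv 0$, the strong maximum principle gives $\widehat w<0$ on $[0,R]\times(0,T)$; in particular $\widehat w(\cdot,t_0)$ is continuous and strictly negative on the compact interval $[0,R]$, so $u_r(r,t_0)\le -\varepsilon_0 r$ on $[0,R]$ for some $\varepsilon_0>0$. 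I also claim $-u_r(R,t)$ has a positive lower bound $c_*$, uniform for $t\in[t_0,T)$: when $T<\infty$ this holds because $u$ is a supersolution of the heat equation, so $u(\cdot,t)\ge e^{(t-t_0)\Delta_{B_R}}u(\cdot,t_0)\ge\kappa\,e^{-\mu_1(t-t_0)}\phi_1\ge\kappa\,e^{-\mu_1(T-t_0)}\phi_1$ for $t\in[t_0,T]$, where $\phi_1>0$ is a first Dirichlet eigenfunction on $B_R$, $\mu_1$ the first eigenvalue and $\kappa>0$ (the constant $\kappa$ coming from comparison at $\partial B_R$); comparing outward normal derivatives at $\partial B_R$, where both functions vanish, gives $u_r(R,t)\le\kappa\,e^{-\mu_1(T-t_0)}\phi_1'(R)=:-c_*<0$. (The case $T=\infty$ is more delicate and would be treated separately, using the limiting stationary profile.)

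Now put $\varepsilon=\min(\varepsilon_0,c_*/R)$ and $\widehat J:=u_r/r+\varepsilon$. From the equation for $\widehat w$ one gets $\widehat J_t-\widehat J_{rr}-\frac{n+1}{r}\widehat J_r-g_u\widehat J=-\varepsilon g_u\le 0$, so on each strip $\{t\le T-\delta\}$, $\widehat J$ is a subsolution of a uniformly parabolic equation whose zeroth--order coefficient is bounded above, and $\widehat J\le 0$ on $\{t=t_0\}$ and on $\{r=R\}$ by the choice of $\varepsilon$. The maximum principle --- after the standard substitution $e^{-Mt}\widehat J$ and an $\eta(1+t)$ perturbation to make the differential inequality strict --- yields $\widehat J\le 0$ on $B_R\times[t_0,T-\delta]$, and $\delta\to 0$ gives $u_r(r,t)\le -\varepsilon r$ in $B_R\times[t_0,T)$. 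Integrating in $r$ from $0$ and using $u(0,t)<1$ gives $1-u(r,t)\ge 1-u(0,t)+\tfrac{\varepsilon}{2}r^2\ge\tfrac{\varepsilon}{2}|x|^2$, which is (1) with the above $t_0$ and $C=\varepsilon/2$.

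The remaining assertions follow quickly. If $x_k\to x\ne 0$ with $u(x_k,t_k)\to 1$ then $1-u(x_k,t_k)\ge C|x_k|^2\to C|x|^2>0$, a contradiction, while $x=0$ is itself a quenching point since $\sup_{B_R}u(\cdot,t)=u(0,t)\to 1$ as $t\to T$ by radial monotonicity and the touchdown hypothesis. For $n\ge 3$, writing $\int_{B_R}dz/(1-u(z,t))=\omega_{n-1}\int_0^R r^{n-1}/(1-u(r,t))\,dr$ ($\omega_{n-1}$ the area of the unit sphere), using (1) on $[t_0,T)$ bounds this by $\frac{\omega_{n-1}}{C}\int_0^R r^{n-3}\,dr=\frac{\omega_{n-1}R^{n-2}}{C(n-2)}$ uniformly in $t$, while on $[0,t_0]$ it is bounded by $|B_R|/(1-\sup_{\overline{B_R}\times[0,t_0]}u)$; together these give (15), and the $r$--integral converges precisely when $n\ge 3$, which is the source of the dimensional restriction. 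The step I expect to be the main obstacle is the uniform lower bound $-u_r(R,t)\ge c_*>0$ on $[t_0,T)$, together with the need to keep $\varepsilon$ independent of the cut-off $\delta$; this is why I take the unweighted auxiliary function $\widehat J=u_r/r+\varepsilon$ rather than one with a weight depending on $u$, since a priori $A(t)$ need not be bounded away from $0$ --- a lower bound on $A(t)$ being essentially equivalent to (15), which is established only afterwards and only for $n\ge 3$.
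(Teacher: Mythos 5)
Your argument is essentially correct and takes a genuinely different route from the paper's. The paper splits $u$ via the Duhamel representation (5): it writes $u_r\le v_{2,r}$ for $t\ge T/3$, where $v_2(x,t)=\int_0^{T/3}F(x,t,s)\,ds$ is a superposition of caloric kernels and hence satisfies the \emph{pure} heat equation on $B_R\times(T/3,T)$. The auxiliary function $\4{q}=r^{n-1}v_{2,r}+\3 r^n$ is then driven by a potential-free radial heat operator, so the maximum principle applies with no worry about coefficient blow-up near the quenching time. You instead work directly with $\hat w=u_r/r$, which solves the radial heat equation in dimension $n+2$ with the potential $g_u=2\lambda A(t)(1-u)^{-3}$, and exploit the sign $g_u\ge 0$ to make $\hat J=\hat w+\3$ a subsolution ($L\hat J=-\3 g_u\le 0$); the unboundedness of $g_u$ near $t=T$ is handled by running the comparison on strips $\{t\le T-\delta\}$, where the needed Gronwall constant $M(\delta)$ is finite, and observing that the conclusion $\hat J\le 0$ is $\delta$-independent. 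The two approaches buy slightly different things: the paper's Duhamel reduction is indifferent to the sign and growth of the nonlinearity and removes the potential entirely, while your direct argument is shorter and closer to the Friedman--McLeod template, but relies crucially on the monotonicity of the reaction term in $u$ (so that $g_u\ge 0$). Your boundary estimate $-u_r(R,t)\ge c_*$ via the heat semigroup and a first-eigenfunction comparison also differs from, and is a serviceable replacement for, the Hopf-lemma bound on $v_{2,r}(R,t)$ that the paper quotes from \cite{F}, \cite{LSU}. Two small remarks: your parenthetical caveat about $T=\infty$ is not a real gap, since the paper's statement and proof likewise implicitly take $T<\infty$ (the Duhamel cut at $T/3$ would be meaningless otherwise); and your first step establishing $u_r\le 0$ should, for cleanliness, be run on $\hat w$ rather than $w=u_r$ to avoid the inessential $1/r^2$ singularity in the zeroth-order coefficient — which is precisely the device you already use in the subsequent steps.
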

\begin{proof}
We first observe that by an argument similar to the proof of Theorem 4.3 of 
\cite{H3} and Theorem 1.5 of \cite{Hs} $1>u(x,t)=u(|x|,t)\ge 0$ is 
radially symmetric in $Q_R^T$ with 
\begin{equation}
u_r(0,t)=0\quad\mbox{ and }u_r(r,t)<0\quad\forall 0<r\le R, 0<t<T.
\end{equation}  
Let
$$
v_1(x,t)=\int_{B_R}G(x,y,t)u_0(y)\,dy
$$
and
$$
F(x,t,s)=\lambda\int_{B_R}G(x,y,t-s)A(s)(1-u(y,s))^{-2}\,dy
$$
where $A(t)$ is given by (8) with $\Omega=B_R$. Since 
$0\le v_1\le a$ satisfies the heat equation 
in $Q_R^T$ with $v_1\equiv 0$ on $\1 B_R\times (0,T)$ and $u_0$ 
is radially symmetric and monotone decreasing for $0\le r\le R$,
by the maximum principle and an argument similar to the proof of 
Theorem 1.5 of \cite{Hs} $v_1$ is radially symmetric and 
$v_{1,r}(r,t)\le 0$ for any $0<r\le R$ and $0<t<T$.   

Similarly for any $0<s<T$, $F(\cdot,\cdot,s)$ is a radially symmetric 
solution of the heat equation in $B_R\times (s,T)$ with 
$F(x,t,s)\equiv 0$ for any $x\in \1 B_R$, $0<s<t<T$, and
\begin{equation}
\frac{\1 F}{\1 r}(r,t,s)<0\quad\forall 0<r=|x|\le R, 0<s<t<T.
\end{equation}
By Theorem 1 $u$ satisfies (5). By (5) and (15),
\begin{equation}
u_r(r,t)=v_{1,r}(r,t)+\int_0^tF_r(r,t,s)\,ds\le\int_0^{\frac{T}{3}}
F_r(r,t,s)\,ds\quad\forall 0\le r\le R,T/3\le t<T.
\end{equation}
Let
\begin{equation}
v_2(x,t)=\int_0^{\frac{T}{3}}F(x,t,s)\,ds
\quad\forall |x|\le R,T/3\le t<T.
\end{equation}
Then $v_2$ is radially symmetric and satisfies
\begin{equation}
\left\{\begin{aligned}
v_{2,t}&=\Delta v_2=\frac{1}{r^{n-1}}(r^{n-1}v_{2,r})_r
\quad\mbox{ in }B_R\times (T/3,T)\\
v_2&=0\qquad\qquad\qquad\qquad\qquad\mbox{ on }\1 B_R\times (T/3,T)\\
v_2&>0\qquad\qquad\qquad\qquad\qquad\mbox{ in }B_R\times\{T/3\}
\end{aligned}\right.
\end{equation}
By (15) for each $T/3<t<T$ $v_{2,r}(r,t)$ is monotone decreasing in 
$0\le r\le R$. Hence for each $T/3<t<T$ $v_2(x,t)$ will attain its 
maximum at $x=0$. Differentiating (18) with respect to $r$ and 
letting $q=r^{n-1}v_{2,r}$,
\begin{equation}
q_t=q_{rr}-\frac{n-1}{r}q_r\quad\mbox{ in }(0,R)\times (T/3,T).
\end{equation}
Let $\4{q}=q+\3 r^n$ for some constant $\3>0$ to be determined later. Then
\begin{equation}
\4{q}_t-\4{q}_{rr}+\frac{n-1}{r}\4{q}_r=0\quad\mbox{ in }(0,R)\times 
(T/3,T)
\end{equation}
and 
\begin{equation}
\4{q}(0,t)=0\quad\forall T/3<t<T.
\end{equation}
By the maximum principle (\cite{F}, \cite{LSU}) there exists a constant 
$C_1>0$ such that 
$$
v_{2,r}(R,t)\le -C_1\quad\forall T/2\le t<T. 
$$
Hence by choosing $0<\3\le\3_1=C_1/(2R)$ we have
\begin{equation}
\4{q}(R,t)\le -(C_1/2)R^{n-1}\quad\forall T/2\le t<T. 
\end{equation}
By the strong maximum principle, 
\begin{align}
&v_2(r,t')<\max_{y\in\2{B}_R}v_2(y,t)=v_2(0,t)\quad\forall 0\le r\le R,
T/2\le t<t'<T\nonumber\\
\Rightarrow\quad&v_2(0,t')<v_2(0,t)\qquad\qquad\qquad\,\,\quad\forall T/2\le t<t'<T\\
\Rightarrow\quad&v_{2,t}(0,t)\le 0\qquad\qquad\qquad\qquad\,\,\quad\forall T/2\le t<T.
\nonumber
\end{align}
Suppose $v_{2,t}(0,t)\equiv 0$ on $(T/2,T)$. Then
$$
v_2(0,t')=v_2(0,t)\quad\forall T/2\le t<t'<T.
$$
This contradicts (23). Hence there exists $t_0\in (T/2,T)$ such that 
\begin{equation}
v_{2,t}(0,t_0)<0.
\end{equation}
By (18),
\begin{equation}
v_{2,t}(r,t_0)=v_{2,rr}(r,t_0)+\frac{n-1}{r}v_{2,r}(r,t_0).
\end{equation}
Since $v_{2,r}(0,t)=0$ for any $T/3\le t<T$, letting $r\to 0$ in (25) 
by the l'hosiptal rule,
\begin{equation}
v_{2,t}(0,t_0)=nv_{2,rr}(0,t_0).
\end{equation}
By (24) and (26),
$$
v_{2,rr}(0,t_0)<0.
$$
Then there exists $0<r_1<R$ such that
\begin{equation}
\3_2=-\max_{0\le r\le r_1}v_{2,rr}(r,t_0)>0.
\end{equation}
Let $\3_3=-\max_{r_1\le r\le R}v_{2,r}(r,t_0)$. Then $\3_3>0$.
Let $\3_0=\min (\3_1,\3_2/2,\3_3/(2R))$ and $0<\3\le\3_0$. Since 
$v_{2,r}(0,t_0)=0$, by the mean value theorem for any 
$0<r\le r_1$ there exists $0\le r'\le r$ such that
\begin{equation}
\4{q}(r,t_0)=r^n(\frac{v_{2,r}(r,t_0)}{r}+\3)=r^n(v_{2,rr}(r',t_0)+\3)<0.
\end{equation}
and
\begin{equation}
\4{q}(r,t_0)=r^{n-1}(v_{2,r}(r,t_0)+\3r)\le r^{n-1}(-\3_3+\3 R)<0
\quad\forall r_1\le r\le R.
\end{equation}
By (28) and (29),
\begin{equation}
\4{q}(r,t_0)<0\quad\forall 0<r\le R.
\end{equation}
By (20), (21), (22), (30) and the strong maximum principle,
\begin{align}
&\4{q}(r,t)<0\quad\forall 0<r\le R, t_0\le t<T\nonumber\\
\Rightarrow\quad&v_{2,r}(r,t)\le -\3 r\quad
\forall 0<r\le R, t_0\le t<T
\end{align}
Hence by (16) and (31),
\begin{equation*}
u_r(r,t)\le v_{2,r}(r,t)\le -\3 r\quad\forall 
0<r\le R, t_0\le t<T.
\end{equation*}
Thus
\begin{equation*}
u(r,t)=u(0,t)+\int_0^ru_r(\rho,t)\,d\rho\le 1-\3\int_0^r\rho\,d\rho
\le 1-(\3/2)r^2
\end{equation*}
holds for any $0\le r\le R$, $t_0\le t<T$, and (1) follows. By (1) 
$x=0$ is the only quenching point of $u$ and for any $n\ge 3$, 
$$
\int_{B_R}\frac{dz}{1-u(z,s)}\le C\int_0^Rr^{n-3}\,dr
\le CR^{n-2}\quad\forall t_0\le s<T
$$
and (13) follows.
\end{proof}

\begin{thm}
Let $0\le u_0\le a$ be a radially symmetric $C^2$ function 
on $\2{B}_R\subset\R^n$ for some constant $0<a<1$ which is monotone 
decreasing in $0\le r\le R$ with 
\begin{equation}
u_0\equiv 0\quad\mbox{ on }\1 B_R\quad\mbox{ and }\quad
u_0''(r)\le -c_1\quad\forall 0\le r\le R
\end{equation}
for some constant $c_1>0$. Let $\chi>0$ and $T>0$. Then there exists 
a constant $\lambda_0>0$ such that for any $\lambda\ge\lambda_0$ and 
$2<\beta<3$ if $u$ is the unique solution of ($P_{\lambda}$) in 
$Q_R^T$ given by Theorem 1, then there exists a constant $C>0$ such 
that (2) holds and for any $n\in\Z^+$ (13) holds. Moreover there 
exists a constant $\lambda_1\ge\lambda_0$ such that if $\lambda
>\lambda_1$ and $T$ is the maximal time of existence of the solution
$u$ of ($P_{\lambda}$), then $T$ satisfies (9) and $x=0$ is the only 
quenching point of $u$ at the quenching time $T$. 
\end{thm}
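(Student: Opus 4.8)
The plan is: reduce the problem to the scalar radial equation for $v=1-u$, prove the profile estimate (2) by comparison with the local MEMS equation, and then read off (13), (9) and the location of the quenching set one after another. As at the beginning of the proof of Theorem 7 (via the proof of Theorem 4.3 of \cite{H3} and Theorem 1.5 of \cite{Hs}), $u(x,t)=u(|x|,t)$ is radially symmetric with $u_r(0,t)=0$ and $u_r(r,t)<0$ for all $0<r\le R$, $0<t<T$, so that $v:=1-u$ solves
\begin{equation*}
v_t=v_{rr}+\frac{n-1}{r}v_r-\lambda A(t)v^{-2}\ \text{ in }(0,R)\times(0,T),\qquad v(R,t)=1,\quad v_r(0,t)=0,\quad v_r(r,t)>0\ (r>0),
\end{equation*}
with $A(t)$ as in (8) for $\Omega=B_R$ and $0<A(t)\le1$. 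Since $u_0\equiv0$ on $\1 B_R$, is monotone decreasing and $u_0''\le-c_1$, the datum $v_0=1-u_0$ is strictly convex with $v_0'(0)=0$, so $v_0(r)\ge m+\tfrac{c_1}{2}r^2$ with $m:=1-u_0(0)\in(0,1]$.

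For (2) I would argue by comparison. Since $0<A(t)\le1$, $u$ is a subsolution of $(P_\lambda')$ with $f\equiv1$, hence $u\le\hatu$ by Lemma 2, where $\hatu$ solves $(P_\lambda')$ with $f\equiv1$ and the same data; thus $1-u\ge1-\hatu$ and it suffices to prove the local touchdown-profile estimate $1-\hatu(x,t)\ge C|x|^{2/\beta}$. For $\lambda\ge\lambda_0$ the existence time $T$ is finite (Theorem 6), and I would obtain this estimate by constructing, for $\lambda$ large, a subsolution of $w_t=\Delta w-\lambda w^{-2}$ (for $w=1-\hatu$) that behaves like $C|x|^{2/\beta}$ near $x=0$ — for instance a variant of $C\bigl(|x|^2+\gamma(T-t)^{\beta/3}\bigr)^{1/\beta}$, whose value at $x=0$ decays like $(T-t)^{1/3}$ (the true quenching rate at the origin) and which tends to $C|x|^{2/\beta}$ as $t\to T$. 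Verifying the differential inequality uses crucially that $2<\beta<3$, so that $2/\beta\in(\tfrac23,1)$ and $3/\beta-1\in(0,\tfrac12)$ lie in the required range, and fitting this function below the datum uses the strict-concavity bound $v_0\ge m+\tfrac{c_1}{2}|x|^2$ together with the smallness of $T$. This produces (2) with a constant $C>0$ that can be taken independent of $\lambda$ for $\lambda\ge\lambda_0$ (and, since $1-u\le1$, one may assume $C\le R^{-2/\beta}$).

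Given (2), the bound (13) is immediate for \emph{every} $n\in\Z^+$: because $2/\beta<1$,
\begin{equation*}
\int_{B_R}\frac{dz}{1-u(z,t)}\le\frac1C\int_{B_R}|z|^{-2/\beta}\,dz=\frac{\omega_{n-1}}{C(n-2/\beta)}R^{\,n-2/\beta}<\infty\qquad(0\le t<T),
\end{equation*}
uniformly in $t$, and it is precisely the hypothesis $\beta>2$ that makes this integral converge when $n=1$. Hence $A(t)\ge\delta_1>0$ on $[0,T)$ with the explicit constant $\delta_1:=\bigl(1+\chi\omega_{n-1}R^{\,n-2/\beta}/(C(n-2/\beta))\bigr)^{-2}$; setting $\lambda_1:=\max\{\lambda_0,\,2n/(\delta_1R^2)\}$, Lemma 7 applies whenever $\lambda>\lambda_1$ and shows that the maximal existence time $T$ is finite and satisfies (9), so $u$ quenches at time $T$. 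Finally (2) gives $1-u(x,t)\ge C\rho^{2/\beta}>0$ whenever $|x|\ge\rho>0$ and $0<t<T$, so no point $x\ne0$ can lie in the quenching set; since quenching occurs at the finite time $T$, the point $x=0$ is the only quenching point (alternatively, Theorem 7 now applies, its hypotheses being in force).

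The one genuinely hard step is (2). A time-independent comparison function $C(|x|^2+\gamma)^{1/\beta}$ cannot be slipped under $v_0$ near $x=0$, because its curvature at the origin is too large relative to $c_1$; and the time-dependent version has to be balanced so that at the space–time quenching point $(0,T)$ it decays no faster than $(T-t)^{1/3}$. Carrying out this construction so that the resulting function is a subsolution \emph{uniformly up to} $(0,T)$ is the technical heart of the argument, and it is there that the strict concavity (32) and the largeness of $\lambda$ are really used; once (2) is in hand, everything else is routine bookkeeping with Lemma 7.
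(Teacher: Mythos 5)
Your route to the key estimate (2) is genuinely different from the paper's, and it contains a real gap.

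The paper does not pass through the local problem at all. It works directly with the nonlocal equation: it sets $w=v^\beta$, passes to $q=r^{n-1}w_r$, and shows $\4q:=q-\3 r^n\ge 0$ by the maximum principle, where the boundary and initial inequalities for $\4q$ are controlled by the boundary gradient bound (34) and the strict concavity (33), and the coefficient $A(t)$ is handled by a self-reinforcing continuity argument: one fixes $\3$ so small that $A(0)>\delta_1$ with $\delta_1$ as in (39), chooses $\lam_0$ so that $\lam\delta_1(3-\beta)>2\3 n(\beta-1)/\beta$, lets $[0,T_1)$ be the maximal interval on which (42) holds, derives (46)--(48) there, and concludes $A(t)>\delta_1$ on $[0,T_1)$, which forces $T_1=T$. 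This bootstrap is the technical heart of the argument and is exactly what lets the estimate propagate to the whole interval $(0,T)$.

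Your comparison $u\le\hatu$ (using $A(t)\le1$ and Lemma 2) only holds on the common interval of existence, and $\hatu$ is driven by a strictly larger forcing $\lambda(1-\hatu)^{-2}\ge\lambda A(t)(1-u)^{-2}$, so its quenching time $\hat T$ can be strictly smaller than the given $T$ (and typically is when $\chi$ is large, since then $A(t)$ is small and $u$ persists much longer than $\hatu$). Beyond $\hat T$ the inequality $1-u\ge1-\hatu$ no longer makes sense and you have no control, so (2) is established only on $[0,\hat T)$, not on $Q_R^T$ as the theorem requires. Restarting the comparison at $t=\hat T$ fails because $u(\cdot,\hat T)$ need not satisfy the strict-concavity hypothesis (32). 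In addition, the explicit subsolution $C\bigl(|x|^2+\gam(\hat T-t)^{\beta/3}\bigr)^{1/\beta}$ is only sketched: fitting it below the datum requires $C\gam^{1/\beta}\hat T^{1/3}\le 1-u_0(0)$ while the differential inequality near $r=0$ forces roughly $C^3\gam^{3/\beta}\gtrsim\lambda$, and these two requirements must be reconciled using a sharp upper bound for $\hat T$ in terms of $\lambda$ with explicit constants — none of which is carried out, and the verification away from $r=0$ (where $\Delta\phi$ competes) is not addressed either.

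Your bookkeeping from (2) onward — deducing (13) for all $n\ge1$ precisely because $2/\beta<1$, hence $A(t)\ge\delta_1$, and then invoking Lemma 7 with $\lam>\lam_1$ to get (9) and pinning the quenching set at $x=0$ — is correct and matches the paper's structure. Note though that the paper takes some extra care in the second half to ensure that $c_2$ in (34), and hence $\3$, $\delta_1$, $\lam_0$, are independent of the a priori unknown maximal existence time (first arguing that $T<1$ by replacing $T$ with $1$ throughout), and uses the slightly more generous $\lam_1=\max(\lam_0,4n/(\delta_1R^2),3/\delta_1)$ to make the contradiction clean; your $\lam_1=\max(\lam_0,2n/(\delta_1R^2))$ would need that same care to close the loop.
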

\begin{proof}
Note that the estimate (2) for the case $n=1$ and sufficiently 
large $\lambda$ is proved in \cite{GHW}. Here we will modify the 
proof of \cite{GHW} so that it works for any dimension $n$. 
Let $\lambda\ge\lambda_0$ and $2<\beta<3$ for some constant 
$\lambda_0>0$ to be determined later. Let $u$ be the unique 
solution of ($P_{\lambda}$) in $Q_R^T$ given by Theorem 1.
By an argument similar to the proof of Theorem 8
$0\le u<1$ is radially symmetric in $Q_R^T$ and $u$ satisfies 
(14). Let $v=1-u$ and $v_0=1-u_0$. Then $v$ satisfies
(10). By (32),
\begin{equation}
v_0=1\quad\mbox{ on }\1 B_R\quad\mbox{ and }\quad v_0'(r)\ge c_1r
\quad\forall 0\le r\le R.
\end{equation}
By (10), (33), and an argument similar to the proof of 
Proposition 2.1 of \cite{G} and Theorem 4.1 
of \cite{GHW} there exists a constant $0<c_2<c_1$ depending on
$u_0$ but is independent of $u$ such that
\begin{equation}
v_r(R,t)\ge c_2R\quad\forall 0\le t<T.
\end{equation}
Let $w=v^{\beta}$. Then $w$ satisfies
\begin{equation}
w_t-\frac{1}{r^{n-1}}(r^{n-1}w_r)_r=-\lambda\beta A(t)w^{1-\frac{3}{\beta}}
-\frac{\beta -1}{\beta}\cdot\frac{w_r^2}{w}\quad\mbox{ in }
(0,R)\times (0,T)
\end{equation}
with $w=1$ on $\1 B_R\times (0,T)$. By (14) $w_r(r,t)\ge 0$ for
any $0\le r\le R$ and $0\le t<T$.
Differentiating (35) with respect to $r$ and setting $q=r^{n-1}w_r$,
we have
\begin{align}
q_t-q_{rr}+\frac{n-1}{r}q_r=&\lambda (3-\beta)A(t)w^{-\frac{3}{\beta}}q
+\frac{\beta-1}{\beta}r^{n-1}\frac{w_r^3}{w^2}-2\frac{(\beta-1)}{\beta}
\frac{w_{rr}}{w}q\nonumber\\
\ge&\lambda (3-\beta)A(t)w^{-\frac{3}{\beta}}q
-2\frac{(\beta-1)}{\beta}\frac{w_{rr}}{w}q\quad\mbox{ in }(0,R)\times 
(0,T).
\end{align}
Now
\begin{equation}
w_{rr}q=\biggl (\frac{q}{r^{n-1}}\biggr)_rq
=\frac{q_rq}{r^{n-1}}-\frac{n-1}{r^n}q^2
\le q_rw_r.
\end{equation}
Since $0<w\le 1$, by (36) and (37),
\begin{align}
q_t-q_{rr}+\frac{n-1}{r}q_r
\ge&\frac{w_r}{w}\biggl (\lambda (3-\beta)A(t)r^{n-1}
w^{1-\frac{3}{\beta}}-2\frac{(\beta-1)}{\beta}q_r\biggr )\nonumber\\
\ge&\frac{w_r}{w}\biggl (\lambda (3-\beta)A(t)r^{n-1}
-2\frac{(\beta-1)}{\beta}q_r\biggr )
\quad\mbox{ in }(0,R)\times (0,T).
\end{align}
We now choose $\3>0$ sufficiently small such that
\begin{equation}
A(0)>\delta_1:=[1+\chi |\omega_{n-1}|(2/\3)^{\frac{1}{\beta}}
(n-(2/\beta))^{-1}R^{n-\frac{2}{\beta}}]^{-2}
\end{equation}
and
\begin{equation}
\3<\beta \min (c_1(1-a)^{\beta -1},c_2)
\end{equation}
where $\omega_{n-1}$ is the surface area of an unit ball in $\R^n$.
We next choose 
\begin{equation*}
\lambda_0>\frac{2\3 n(\beta -1)}{\beta(3-\beta)\delta_1}.
\end{equation*}
Since $\lambda\ge\lambda_0$,
\begin{equation}
\lambda (3-\beta)\delta_1>2\frac{(\beta-1)}{\beta}\3 n.
\end{equation}
By (39) and (41) there exists $0<T_1\le T$ such that $[0,T_1)$ is 
the maximal interval such that
\begin{equation}
\lambda (3-\beta)A(t)>2\frac{(\beta-1)}{\beta}\3 n
\end{equation} 
holds for any $0\le t<T_1$. Let $\4{q}=q-\3r^n$. Then by (38) 
and (42),
\begin{equation}
\4{q}_t-\4{q}_{rr}+\frac{n-1}{r}\4{q}_r
+2\frac{(\beta-1)}{\beta}\frac{w_r}{w}\4{q}_r\ge 0
\quad\mbox{ in }(0,R)\times (0,T_1).
\end{equation}
Now by (34) and (40),
\begin{equation}
\4{q}(R,t)=R^{n-1}(\beta v^{\beta-1}v_r(R,t)-\3 R)
=R^{n-1}(\beta v_r(R,t)-\3 R)\ge 0\quad\forall
0\le t<T.
\end{equation}
By (33) and (40),
\begin{equation}
\4{q}(r,0)\ge 0\quad\forall 0\le r\le R.
\end{equation}
By (43), (44), (45) and the maximum principle,
\begin{align}
&\4{q}(r,t)\ge 0\quad\forall 0\le r\le R, 0\le t<T_1\nonumber\\
\Rightarrow\quad&w_r(r,t)\ge\3 r\quad\forall 0\le r\le R, 
0\le t<T_1\nonumber\\
\Rightarrow\quad&w(r,t)\ge w(0,t)+\frac{\3}{2}r^2>
\frac{\3}{2}r^2\quad\forall 0\le r\le R, 0\le t<T_1\nonumber\\
\Rightarrow\quad&v(r,t)>\biggl (\frac{\3}{2}\biggr)^{\frac{1}{\beta}}
r^{\frac{2}{\beta}}\quad\forall 0\le r\le R, 0\le t<T_1.
\end{align}
By (46),
\begin{align}
&\int_{B_R}\frac{dz}{v(z,t)}<|\omega_{n-1}|(2/\3)^{\frac{1}{\beta}}
(n-(2/\beta))^{-1}R^{n-\frac{2}{\beta}}\quad\forall 0\le t<T_1\\
\Rightarrow\quad&A(t)>\delta_1\quad\forall 0\le t<T_1.
\end{align}
If $T_1<T$, then by (41) and (48) there exists $T_2\in (T_1,T)$
such that (42) holds on $(0,T_2)$. This contradicts the maximality
of $T_1$. Hence $T_1=T$. By (46) and (47) we get that (2) and (13) 
hold. 

We now let $\lambda>\lambda_1$ for some constant $\lambda_1$ to be 
determined later and let $T$ be the maximal time of existence of the 
solution $u$ of ($P_{\lambda}$). Suppose $T>1$. By repeating 
the above argument but with $T$ being replaced by $1$ in the 
argument and noting that then the constant $c_2$ is now 
independent of $T$ we can find constants $\3$, $\delta_1$, and 
$\lambda_0$ as before which are then all independent of $T$. Then
(2) holds in $Q_R^1$ and  (13), (48), hold for any $0\le t\le 1$.
Let 
\begin{equation}
\lambda_1=\max (\lambda_0,4n/(\delta_1R^2),3/\delta_1).
\end{equation} 
By (49) and Lemma 7, 
\begin{equation*}
1\le\frac{1}{\lambda\delta_1}
\left(1-\frac{2n}{\lambda\delta_1R^2}\right)^{-1}
\le\frac{2}{\lambda\delta_1}\le\frac{2}{3}.
\end{equation*}
Contradiction arises. Hence $T<1$. Note that then we can choose the constant 
$c_2$ in the above argument to be independent of $T$. Hence the constants 
$\3$, $\delta_1$, and $\lambda_0$ are all independent of $T$.
Then by (48) Lemma 7 holds. Hence $T$ satisfies (9). Thus 
$u$ quenches in a finite time $T$ bounded above by the right hand 
side of (9). By (2) $x=0$ is the only quenching point of $u$ at the 
quenching time $T$ and the theorem follows. 
\end{proof}


\begin{thebibliography}{99}

\bibitem{F} A.~Friedman, {\em Partial differential equations of 
parabolic type}, Prentice-Hall, Inc., Englewood Cliffs, N.J.,
U.S.A., 1964.

\bibitem{FMP} G.~Flores, G.A.~Mercado and J.A.~Pelesko, {\em Dynamics
and Touchdown in Electrostatic MEMS}, Proceedings of ICMENS (2003), 182--187.

\bibitem{GG1} N.~Ghoussoub and Y.J.~Guo, {\em On the partial differential
equations of electrostatic MEMS devices: stationary case, SIAM J.
Math. Anal.} 38 (2007), no. 5, 1423--1449.

\bibitem{GG2} N.~Ghoussoub and Y.J.~Guo, {\em On the partial 
differential equations of electrostatic MEMS devices II: dynamic case},
NoDEA Differential Equations Appl. 15 (2008), no. 1-2, 115--145.

\bibitem{GG3} N.~Ghoussoub and Y.J.~Guo, {\em On the partial 
differential equations of electrostatic MEMS devices III: refined touchdown
behavior}, J. Diff. Eqns. 244 (2008), no. 9, 2277--2309.

\bibitem{GG4} N.~Ghoussoub and Y.J.~Guo, {\em Estimates for the
quenching time of a parabolic equation modeling electrostatic MEMS}, 
Methods Appl. Anal. 15 (2008), 361--376.

\bibitem{GG5} N.~Ghoussoub and Y.J.~Guo, {\em Pull-in voltage and
steady states of nonlocal electrostatic MEMS}, preprint.

\bibitem{GHW} J.S.~Guo, B.~Hu and C.J.~Wang, {\em A nonlocal quenching 
problem arising in a micro-electro mechanical system},  Quart. Appl. 
Math. 67  (2009),  no. 4, 725--734.

\bibitem{G} Y.J.~Guo, {\em On the partial differential equations
of electrostatic MEMS devices III: Refined touchdown behavior},
J. Diff. Eqns 244 (2008), 2277--2309.

\bibitem{GPW} Y.J.~Guo, Z.G.~Pan and Michael J. Ward,
{\em Touchdown and Pull-in Voltage Behavior of a MEMS Device with
Varying Dielectric Properties}, SIAM J. Appl. Math. 66 (2005), no. 1,
309--338.

\bibitem{GW1} Z.~Guo and J.~Wei, {\em On the Cauchy problem for a 
reaction-diffusion equation with a singular nonlinearity}, J. Diff.
Eqns 240 (2007), 279--323.

\bibitem{GW2} Z.~Guo and J.~Wei, {\em On a fourth order nonlinear
elliptic equation with negative exponent},  SIAM J. Math. Anal. 40  
(2008/09),  no. 5, 2034--2054. 

\bibitem{Hs} S.Y.~Hsu, {\em Asymptotic behaviour of solutions
of the equation $u_t=\Delta\text{log }u$ near the extinction time},
Advances in Diff. Eqns. 8 (2003), no. 2, 161--187.

\bibitem{H1} K.M.~Hui, {\em Growth rate and extinction rate of a 
reaction diffusion equation with a singular nonlinearity}, Diff.
and Int. Eqns 22 (2009), nos. 7--8, 771--786.

\bibitem{H2} K.M.~Hui, {\em Global and touchdown behaviour of the generalized 
MEMS device equation}, Adv. Math. Sci. Appl. 19 (2009), no. 1, 347-370. 
 
\bibitem{H3} K.M.~Hui, {\em Existence and dynamic properties of a parabolic 
nonlocal MEMS equation}, http://arxiv.org/abs/0809.4209v2. 

\bibitem{KMS} N.I.~Kavallaris, T.~Miyasita and T.~Suzuki,{\em
Touchdown and related problems in electrostatic MEMS device equation}, 
NoDEA Nonlinear Differential Equations Appl.  15  (2008),  no. 3, 363--385. 

\bibitem{LSU} O.A.~Ladyzenskaya, V.A.~Solonnikov, and
N.N.~Uraltceva, {\em Linear and quasilinear equations of
parabolic type}, Transl. Math. Mono. Vol 23, Amer. Math. Soc., 
Providence, R.I., U.S.A., 1968.

\bibitem{LY} F.~Lin and Y.~Yang, {\em Nonlinear non-local elliptic equation
modelling electrostatic actuation}, Proc. Royal Soc. London, Ser. A 463 
(2007), 1323--1337.

\bibitem{MW} L.~Ma and J.C.~Wei, {\em Properties of postive solutions
to an elliptic equation with negative exponent}, J. Functional Analysis
254 (2008), 1058--1087.

\bibitem{P} J.A.~Pelesko, {\em Mathematical Modeling of
Electrostatic MEMS with Tailored Dielectric Properties}, SIAM J.
Appl. Math. 62 (2002), no. 3, 888--908.

\bibitem {PB} J.A.~Pelesko and D.H.~Bernstein, {\em Modeling MEMS and NEMS},
Chapman Hall and CRC Press, 2002.

\bibitem{PT} J.A.~Pelesko and A.A.~Triolo, {\em Nonlocal Problems in
MEMS Device Control}, J. Eng. Math. 41 (2001), no. 4, 345--366.

\end{thebibliography}
\end{document}